\documentclass[a4paper,11pt]{article}

\usepackage[T1]{fontenc}

\usepackage{bbm}
\usepackage{amsmath, wrapfig}
\usepackage{dsfont, a4wide, amsthm, amssymb, amsfonts, graphicx}
\usepackage{fancyhdr, xspace, psfrag, setspace, supertabular, color, enumerate}
\usepackage{hyperref}

\setlength{\parskip}{0.3ex}

\setcounter{MaxMatrixCols}{20}

\newtheorem{theorem}{Theorem}[section]
\newtheorem{lemma}[theorem]{Lemma}
\newtheorem{proposition}[theorem]{Proposition}

\newtheorem*{conjecture*}{Conjecture}

\newtheorem*{theorem*}{Theorem}
\newtheorem{definition}[theorem]{Definition}
\newtheorem{question}[theorem]{Question}
\newtheorem*{question*}{Question}

\theoremstyle{definition}
\newtheorem{example}[theorem]{Example}

\def\e{\varepsilon}
\def\a{\alpha}
\def\b{\beta}

\def\ll{\lambda}
\def\s{\sigma}
\def\kk{\kappa}
\def\ce{\mathcal{E}}

\def\cp{\mathcal{P}}
\def\R{\mathbb{R}}

\def\Z{\mathbb{Z}}

\def\Z{\mathbb{Z}}
\def\E{\mathop{\mathbb{E}}}
\def\P{\mathbb{P}}

\def\so3{\mathrm{SO}(3)}
\def\fso3{\mathrm{FSO}(3)}

\let\eps\varepsilon

\DeclareMathOperator{\Var}{Var}
\DeclareMathOperator{\Cov}{Cov}

\title{The structure of sets with cube-avoiding sumsets}
\author{Thomas Karam\footnote{Mathematical Institute, University of Oxford. Email: \texttt{thomas.karam@maths.ox.ac.uk}. Supported by ERC Advanced Grant 883810.} \and Peter Keevash\footnote{Mathematical Institute, University of Oxford. Email: \texttt{keevash@maths.ox.ac.uk}. Supported by ERC Advanced Grant 883810.}
}

\begin{document}
\maketitle

\begin{abstract}
We prove that if $d \ge 2$ is an integer, $G$ is a finite abelian group, $Z_0$ is a subset of $G$ not contained in any strict coset in $G$, and $E_1,\dots,E_d$ are dense subsets of $G^n$ such that the sumset $E_1+\dots+E_d$ avoids $Z_0^n$ then $E_1, \dots, E_d$ essentially have bounded dimension. More precisely, they are almost entirely contained in sets $E_1' \times G^{I^c}, \dots, E_d' \times G^{I^c}$, where the size of $I \subset [n]$ is non-zero and independent of $n$, and $E_1',\dots,E_d'$ are subsets of $G^{I}$ such that the sumset $E_1'+\dots+E_d'$ avoids $Z_0^I$.
\end{abstract}

\section{Introduction}

An important direction in combinatorial number theory and geometry considers questions that are broadly of the following kind:~given two subsets $A$,$B$ of some ambient (abelian) group, what may be deduced about the structure of $A$,$B$ if the sumset $A+B$ is somehow constrained?

Among various constraints that may be considered for $A+B$, the most basic and most studied is bounding $|A+B|$ in terms of $|A|$ and $|B|$. For example, when $A=B$, there is a long line of research describing the structure of finite sets $A \subset \Z$ with $|A+A| \le K|A|$ for some fixed $K$. Here the landmark result is Freiman's theorem, which shows that such $A$ must be contained in some multidimensional arithmetic progression $P$ with $\dim(P)$ and $|P|/|A|$ both bounded by a constant depending only on $K$. Further milestones in this direction are an extension to general (not necessarily abelian) groups by Breuillard, Green and Tao \cite{Breuillard Green Tao} and a resolution of Marton's Conjecture (aka the Polynomial Freiman-Ruzsa Conjecture) on a polynomial quantitative improvement for  abelian groups with bounded torsion by Gowers, Green, Manners and Tao \cite{Gowers Green Manners Tao}.

It is also natural to constrain $A+B$ not by placing an upper bound on its size, but by requiring that it avoids a specific structured set. For instance, a theorem of S\'ark\"ozy \cite{Sarkozy} provides upper bounds on the size of a subset $A \subset [n]$ such that $A-A$ does not contain any prime integer, and Green \cite{Green} recently obtained a related result for shifted primes. Another well-studied example is that of subsets $A \subset [n]$ such that $A-A$ does not contain any perfect square, for which bounds were obtained by S\'ark\"ozy \cite{Sarkozy} and Furstenberg \cite{Furstenberg}, with a more recent improvement by Bloom and Maynard \cite{Bloom Maynard}. 
 
Our own focus in the present paper will be on high-dimensional subsets $E,F$ of a large power $G^n$ of some fixed finite abelian group $G$, 
satisfying the constraint that for some fixed $Z_0 \subset G$ the sumset $E+F$ avoids the power $Z_0^n$.
To illustrate this, we consider two subsets $A,B$ of $\Z^n$ with  constraints 
according to residues modulo some integer $N$. Here we give two examples, the first being trivial, 
and the second already capturing the main difficulties of the question.
Suppose first that $N=2$ and we require that there is no $(a,b) \in A \times B$ with $a_i+b_i$ even for every $i \in [n]$. 
Equivalently, projecting $A,B$ modulo $2$ to subsets $E,F$ of $\Z_2^n$,
we require that $E+F$ does not contain $\{0\}^n \in \Z_2^n$, or equivalently that $F$ is disjoint from $-E$.
In this example, we do not obtain any non-trivial structure for $E$ and $F$.

Now suppose that $N>2$ and $A+B$ avoids $(N\Z \cup (N\Z+1))^n$. 
Equivalently, projecting $A,B$ modulo $N$ to subsets $E,F$ of $\Z_N^n$,
we require $E+F$ to be disjoint from $\{0,1\}^n$. 
Here it is not clear what structure this imposes on $E$ and $F$. 
Furthermore, one may hope that an answer to this question may lead 
to progress on the longstanding Additive Basis Conjecture of Jaeger, Linial, Payan and Tarsi
(reported by Alon, Linial and Meshulam \cite{Alon Linial Meshulam}),
which is equivalent to the statement that for any prime $p$
there is a constant $C=C(p)$ such that for any
invertible linear maps $A_i \in GL(n,\Z_p)$ for $1 \le i \le C$
we have $A_1(\{0,1\}^n) + \cdots A_C(\{0,1\}^n) = \Z_p^n$.

In a companion paper \cite{Karam Keevash} we solve the corresponding continuous extremal problem
for $E$ and $F$ in the torus $(\R/\Z)^n$, which can be rephrased in terms of extremal expansions by cubes:
given $m, \ell \in (0,1)$ we describe the extremal examples (and show stability) for minimising
$\mu(E + [0,\ell]^n)$ among all $E \subset  (\R/\Z)^n$ with $\mu(E)=m$. One application of our extremal result
is another proof of the known bound $C < O(\log n)$ for the Additive Basis Conjecture.
The present paper concerns structural properties of $E,F$ with $(E+F) \cap \{0,1\}^n = \emptyset$
that have density bounded below but may be quite far from any extremal example.

\subsection{Results}
 
Next we make three basic observations
that motivate our structural result for subsets $E,F$ of $\Z_N^n$ 
such that $E+F$ avoids $\{0,1\}^n$ (or more generally $Z_0^n$ for some $Z_0 \subset \Z_N$).
\begin{enumerate}
\item Many natural examples are low-dimensional, e.g. $E=F= \{x \in \Z_3^n: x_1 = 1\}$.
\item Any example in some dimension can be extended to any higher dimension:
if $E',F'$ are subsets of $\Z_N^k$ such that $E'+F'$ avoids $\{0,1\}^k$ 
then we can extend $E',F'$ to subsets $E,F$ of $\Z_N^n$ for any $n>k$
simply by taking $E = E' \times \Z_N^{n-k}$ and $F = F' \times \Z_N^{n-k}$.
\item If $E+F$ avoids $\{0,1\}^n$ then so does $E'+F'$ for any $E' \subset E$ and $F' \subset F$.
\end{enumerate}
 
Conversely, our main result will state that any $E,F$ with $(E+F) \cap Z_0^n$ empty
can be approximated by sets obtained by these three moves
(starting with a low-dimensional set, extending to a higher dimension, taking arbitrary subsets).
We start with the formulation when $N$ is prime and $E+F$ avoids $Z_0^n$ 
for some $Z_0 \subset \Z_N$ with $|Z_0|>1$
(as illustrated above, if $|Z_0|=1$ then $E,F$ need not have any non-trivial structure).
Throughout, if $I$ is a subset of $[n]$ we write $I^c$ for the complement of $I$ in $[n]$. 

\begin{theorem} \label{Main result}
Let $p$ be a prime, let $\e > 0$, and let $Z_0 \subset \Z_p$ with $|Z_0|>1$. 
Then there exists $C=C(p,\e)$ such that for any subsets $E,F$ of $\Z_p^n$ with $(E+F) \cap Z_0^n = \emptyset$
there exist $I \subset [n]$ with $0<|I|<C$ and subsets $E',F'$ of $\Z_p^{I}$ satisfying 
\[ |E \setminus (E' \times \Z_p^{I^c})| \le \e |\Z_p^n|,
\quad |F \setminus (F' \times \Z_p^{I^c})| \le \e |\Z_p^n|,
\quad (E'+F') \cap Z_0^I = \emptyset. \]
\end{theorem}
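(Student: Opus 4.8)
Writing $\delta_E=p^{-n}|E|$, $\delta_F=p^{-n}|F|$, I would first dispose of extreme densities: if $\delta_E\le\e$, take $I=\{1\}$, $E'=\emptyset$ and $F'$ arbitrary; if $\delta_E\ge 1-\e$, then a short averaging argument (each translate $E+f$ has density $\ge 1-\e$ yet must avoid the density-$(|Z_0|/p)^n$ set $Z_0^n$) forces $\delta_F\le\e$, reducing to the previous case; symmetrically in $F$. So assume $\e\le\delta_E,\delta_F\le 1-\e$. It is also harmless to assume $n$ is large (if $n<C$ simply take $I=[n]$, $E'=E$, $F'=F$), and to replace $Z_0$ by a two-element subset, so that $|Z_0|=2$.

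\textbf{A greedy dimension reduction.} For $I\subseteq[n]$ and $x\in\Z_p^I$, let $E_x=\{y\in\Z_p^{I^c}:(x,y)\in E\}$, $g_E(x)=p^{-|I^c|}|E_x|$, and put $Q_I(E)=\E_{x\in\Z_p^I}[g_E(x)(1-g_E(x))]$, similarly $g_F, Q_I(F)$. Refining $I$ to $I\cup\{i\}$ averages $g_E(x)$ over its $p$ children, so
\[ Q_I(E)-Q_{I\cup\{i\}}(E)=\E_{x\in\Z_p^I}\big[\var_{a\in\Z_p}g_E(x,a)\big]\ge0,\qquad Q_\emptyset(E)\le\tfrac14,\qquad Q_{[n]}(E)=0. \]
Also $Q_I(E)\le d_I(E)\le\sqrt{Q_I(E)}$, where $d_I(E)=p^{-n}|E\setminus(E'_I\times\Z_p^{I^c})|$ and $E'_I=\{x:g_E(x)>1/2\}$ (the first inequality from $g\le 2g(1-g)$ when $g\le1/2$). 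Hence it suffices to find $I$ with $0<|I|<C$ and $Q_I(E),Q_I(F)\le\e/2$: then $E'=E'_I$, $F'=F'_I$ give the first two conclusions, while $(E'+F')\cap Z_0^I=\emptyset$ holds since $x\in E'_I$, $x'\in F'_I$, $x+x'\in Z_0^I$ would force $g_E(x),g_F(x')>1/2$, hence $E_x+F_{x'}=\Z_p^{I^c}$ (densities summing past $1$), hence some $y\in Z_0^{I^c}$ with $(x+x',y)\in(E+F)\cap Z_0^n$, a contradiction. I would build $I$ greedily from a singleton, adding $i\in I^c$ whenever it decreases $Q(E)+Q(F)$ by at least a fixed $\gamma=\gamma(p,Z_0,\e)>0$; since each potential starts below $1/4$ and never increases, this halts with $|I|\le 1+1/(2\gamma)=:C$.

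\textbf{The crux: a stuck process has small potential.} Everything reduces to showing that when the process halts, $Q_I(E),Q_I(F)\le\e/2$. The engine is a Fourier-analytic lemma applied fibrewise: \emph{if $A,B\subseteq\Z_p^m$ have densities in $[\eta,1-\eta]$ and each is $\rho$-close to ``hyperplane balanced'' (i.e.\ $|\widehat{\mathbbm 1_A}|,|\widehat{\mathbbm 1_B}|\le\rho p^m$ on every nontrivial character supported on a single coordinate), with $\rho$ small in terms of $\eta$, then $(A+B)\cap Z_0^m\ne\emptyset$.} One proves this by expanding $\sum_z(\mathbbm 1_A*\mathbbm 1_B)(z)\,\mathbbm 1_{Z_0^m}(z)$ over characters $\chi=\bigotimes_i\chi_i$: the trivial term is $\approx\alpha\beta(|Z_0|/p)^m|\Z_p^m|$, single-coordinate characters are killed by near-balancedness, and each character with $\ge2$ active coordinates loses a multiplicative factor $\le 1-\tau p/|Z_0|<1$ per active coordinate --- where $\tau=\tau(p,Z_0)>0$ satisfies $|\widehat{\mathbbm 1_{Z_0}}(t)|\le|Z_0|-\tau p$ for $t\ne0$, which exists since, $p$ being prime, no nontrivial character of $\Z_p$ is constant on a $2$-element set --- with Parseval bounding the tail, so the main term dominates. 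Now suppose, say, $Q_I(E)\ge\e/2$ at the end. A reverse Markov inequality yields a $\gtrsim\e$-dense set of $x$ with $g_E(x)\in[\eta,1-\eta]$, $\eta\asymp\e$; being stuck means $\E_x\var_a g_E(x,a)<\gamma$ for all $i\in I^c$, so all but a $\lesssim\gamma$-fraction of the fibres of $E$ and of $F$ are $\rho$-balanced with $\rho$ as small as desired. For a fat balanced $E$-fibre over $x$, the lemma (using $(E_x+F_{x'})\cap Z_0^{I^c}=\emptyset$ whenever $x+x'\in Z_0^I$) forbids every $F$-fibre over such an $x'$ from being fat-and-balanced, and --- since fat-plus-near-full would also give $E_x+F_{x'}=\Z_p^{I^c}$ --- a density count forces $F_{x'}$ to be thin or (rarely) unbalanced. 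Translating, the fat balanced $E$-fibre set and the analogous $F$-set furnish dense $X,Y\subseteq\Z_p^I$ with $(X+Y)\cap Z_0^I=\emptyset$; if $I=[n]$ this contradicts $Q_I(E)>0$, and otherwise $|I|<n$, so applying the theorem \emph{inductively} to $(X,Y)$ shows these fat-fibre sets are essentially determined by a bounded $I_0\subseteq I$, which lets me reach $Q(E),Q(F)\le\e/2$ with a bounded coordinate set --- contradicting that the greedy process had no more profitable move.

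\textbf{Main obstacle.} I expect the real work to be twofold: (i) robustifying the Fourier lemma quantitatively --- exact hyperplane-balancedness is easy, but the approximate version pins down the hierarchy $\gamma\ll\rho\ll$ (powers of $\e$ and $\tau$) and a somewhat delicate optimization; and (ii) the ``stuck'' analysis, where the structure forced on $E,F$ may live on the already-chosen coordinates $I$ rather than on $I^c$, so one cannot simply recurse on $\Z_p^{I^c}$ but must route a genuinely lower-dimensional auxiliary instance through the inductive hypothesis and reconcile it with greedy maximality. The remaining ingredients --- the density reductions, the martingale and closeness-to-cylinder estimates, and the verification of $(E'+F')\cap Z_0^I=\emptyset$ --- should be routine.
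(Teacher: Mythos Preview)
Your proposal has a genuine gap: the ``crux'' Fourier lemma is false as stated, and the entire stuck-analysis rests on it.

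Take $p=5$, $Z_0=\{0,1\}$, and $A=B=\{x\in\Z_5^m:x_1+x_2+x_3=2\}$. These sets have density $1/5$, and for \emph{every} single coordinate $i$ the marginal of $A$ is exactly uniform (so every nontrivial degree-one Fourier coefficient vanishes, i.e.\ $\rho=0$). Yet $A+B=\{x:x_1+x_2+x_3=4\}$, and for $x\in\{0,1\}^m$ one has $x_1+x_2+x_3\in\{0,1,2,3\}\pmod 5$, so $(A+B)\cap Z_0^m=\emptyset$. Thus hyperplane-balancedness plus moderate density does \emph{not} force $(A+B)\cap Z_0^m\ne\emptyset$. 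The Fourier sketch breaks down because the decay factor $(1-\tau p/|Z_0|)^{|\mathrm{supp}\,\chi|}$ on $\widehat{\mathbbm 1_{Z_0^m}}$ is not enough to beat the Parseval mass of $\widehat{\mathbbm 1_A}\widehat{\mathbbm 1_B}$: after Cauchy--Schwarz the error is of order $\sqrt{\alpha\beta}$ against a main term of order $\alpha\beta$, and no amount of degree-one control fixes this.

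This kills the rest of the argument. In the example above your greedy process, started at any singleton $I=\{i_0\}$, is immediately stuck: for every $i\in I^c$ one has $\Var_{a}g_E(x,a)=0$, because fixing any \emph{two} of the three relevant coordinates still leaves density exactly $1/5$. So $Q_I(E)=4/25$ persists while no single coordinate offers a drop $\ge\gamma$. Your inductive rescue then also fails: every fibre $E_x,F_{x'}$ is fat and perfectly balanced, so your sets $X,Y\subset\Z_p^I$ are all of $\Z_p^I$, and $(X+Y)\cap Z_0^I$ is certainly nonempty --- the step ``the lemma \ldots\ forbids every $F$-fibre over such an $x'$ from being fat-and-balanced'' is exactly where the false lemma is invoked.

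The paper's proof confronts precisely this obstruction. It uses $(r,\beta)$-pseudorandomness (control on restrictions to \emph{$r$} coordinates simultaneously, with $r$ depending on $\e$) rather than degree-one balancedness, and the analogue of your crux lemma (Proposition~\ref{Obtaining a non-empty intersection}) is obtained not by elementary Fourier analysis but from the correlation theorem of Ha\k{z}\l{}a--Holenstein--Mossel. The regularity lemma (Lemma~\ref{Simultaneous regularity lemma}) is correspondingly an energy-increment that at each step may add up to $r$ coordinates per bad fibre, not one; this is what lets it detect structure hidden in a combination of several coordinates. A minor further point: your chain $Q_I(E)\le d_I(E)\le\sqrt{Q_I(E)}$ is not right (e.g.\ $g\equiv 0.6$ gives $Q_I=0.24$, $d_I=0$); the usable inequality is $d_I(E)\le 2Q_I(E)$, which is harmless but worth stating correctly.
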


We note that in Theorem \ref{Main result} we require the set $I$ of structured coordinates 
to have bounded size, be non-empty (for non-triviality) and be the same for both $E$ and $F$.
If instead we showed that $E$ and $F$ are structured with respect to some unrelated sets $I_E$ and $I_F$
then we could recover the formulation above by taking $I = I_E \cup I_F$, but in the converse direction 
we cannot allow unrelated sets (e.g.~if $I_E \cap I_F = \emptyset$ then we may have $E+F=\Z_p^n$).

Next we suppose that the modulus $N$ is not necessarily prime. 
Compared to the case of cyclic groups $\Z_N$ it will cost us little to tackle the case of finite abelian groups, 
so we shall present the corresponding generalisation of Theorem \ref{Main result} in the latter setting. 
Throughout, if $G$ is a finite abelian group, then we will say that a \emph{strict coset in $G$} 
is a set of the type $H + \{x\}$ where $H$ is a subgroup of $G$ with $H \ne G$, and $x$ is an element of $G$.

\begin{theorem} \label{Main result, groups case}

Let $G$ be a finite abelian group and let $\e > 0$. Then there exists $C=C(G,\e)$ such that 
if $Z_0 \subset G$ is not contained in any strict coset in $G$ 
and $E,F$ are subsets of $G^n$ with $(E+F) \cap Z_0^n = \emptyset$ 
then there exist $I \subset [n]$ with $0<|I|<C$ and subsets $E',F'$ of $G^{I}$ satisfying 
\[ |E \setminus (E' \times G^{I^c})| \le \e |G^n|,
\quad |F \setminus (F' \times G^{I^c})| \le \e |G^n|,
\quad (E'+F') \cap Z_0^I = \emptyset. \]
\end{theorem}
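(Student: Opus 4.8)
### Proof proposal

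The plan is to reduce the general finite-abelian-group case to the prime case (Theorem \ref{Main result}) by a sequence of reductions, and then to prove the prime case directly by a density-increment / structure-versus-randomness argument on the coordinates.

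\textbf{Reduction to the prime case.} First I would observe that a finite abelian group $G$ has a composition series all of whose factors are $\Z_p$ for various primes $p$, and more concretely $G$ embeds in, or quotients onto, products of cyclic groups of prime power order. The key point is that ``$Z_0$ is not contained in any strict coset'' is exactly the statement that $Z_0$ is not contained in any proper subgroup translate, equivalently that the differences $Z_0 - Z_0$ generate $G$. I would then try to find a prime $p$ and a surjective homomorphism $\pi : G \to \Z_p$ (or work with a minimal quotient) such that $\pi(Z_0)$ has size $>1$ in $\Z_p$; such a $\pi$ exists precisely because $Z_0$ is not contained in a strict coset. Applying $\pi$ coordinatewise sends $E, F \subset G^n$ to subsets of $\Z_p^n$ whose sumset avoids $\pi(Z_0)^n$, but this loses information, so instead I would argue more carefully: decompose $G$ via its $p$-primary components and iterate, or alternatively prove a version of the theorem for $\Z_{p^k}$ directly and then use the structure theorem for finite abelian groups to assemble the general case from a bounded number of such factors, with $C(G,\e)$ obtained by combining the constants multiplicatively over the (bounded number of) factors and taking $I$ to be the union of the structured coordinate sets obtained for each factor.

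\textbf{The prime case.} For Theorem \ref{Main result}, the natural approach is an energy/density increment on coordinates. Since $|Z_0| > 1$, fix two distinct elements $z_0, z_1 \in Z_0$; then $E + F$ avoiding $Z_0^n$ in particular means that for every $(x,y) \in E \times F$ and every $i$, the pair $(x_i + y_i)$ is not simultaneously able to be made equal to $z_0$ or $z_1$ --- more usefully, in each coordinate the ``forbidden'' set for $x_i + y_i$ has size $\ge 2$. One then studies, for each coordinate $i$, how ``spread out'' the slices of $E$ and $F$ are. The dichotomy to set up is: either there is a coordinate $i$ such that fixing the $i$-th coordinate to a typical value essentially preserves the densities of both $E$ and $F$ (i.e.\ coordinate $i$ is close to ``independent'' of membership in both sets), in which case that coordinate is irrelevant and can be absorbed into $I^c$ after passing to a dense sub-box; or else every coordinate carries non-trivial information, and one shows this cannot persist for more than $C(p,\e)$ coordinates before forcing $E + F$ to hit $Z_0^n$. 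Quantitatively I expect to track a monotone quantity such as a sum of entropies or $L^2$-energies of the coordinate projections of $E$ and $F$, run a compactness or finitary induction on it, and use a Fourier/counting lemma on $\Z_p$ (using $|Z_0| \ge 2$) at each step to convert ``coordinate $i$ is spread out for both sets'' into an additive gain.

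\textbf{Main obstacle.} The hard part will be the core iteration in the prime case: making precise the sense in which a coordinate is ``structured'' versus ``unstructured'' for the \emph{pair} $(E,F)$ simultaneously, and proving that only boundedly many structured coordinates can coexist with the sumset condition. The subtlety flagged in the remark after Theorem \ref{Main result} --- that $I$ must be the same for $E$ and $F$, since unrelated structured sets $I_E, I_F$ can conspire to give $E+F = \Z_p^n$ --- means the increment argument must be run jointly on $E$ and $F$ rather than on each separately, and the counting lemma must exploit the interaction between a spread-out slice of $E$ and a spread-out slice of $F$ in a single coordinate. Getting the quantitative dependence of $C$ on $p$ and $\e$ to close the induction (rather than blowing up with $n$) is where the main technical effort will lie; a cleaner route may be to phrase the whole argument as a statement about the function $\mathbbm{1}_E * \mathbbm{1}_F$ and its correlation with indicator functions of sub-cubes, iterating until no coordinate contributes.
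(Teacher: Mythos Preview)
Your reduction to the prime case is a wrong turn, and it is not how the paper proceeds. The paper does \emph{not} reduce $G$ to $\Z_p$; it simply reruns the entire prime-case argument with $G$ in place of $\Z_p$. The only place where primality was ever used is in verifying a correlation condition $\rho(\mathcal{P})<1$ for the distribution $\mathcal{P}$ uniform on $\{(x,y)\in G^2 : x+y\in Z_0\}$, and that verification goes through directly for any finite abelian $G$ once one assumes $Z_0$ is not contained in a strict coset (equivalently, that $Z_0-Z_0$ generates $G$, which you do note). Your proposed reduction, by contrast, has a real gap: you correctly observe that pushing forward along a quotient $\pi:G\to\Z_p$ loses the hypothesis, since $(E+F)\cap Z_0^n=\emptyset$ does not imply $(\pi^{\otimes n}E+\pi^{\otimes n}F)\cap \pi(Z_0)^n=\emptyset$; but your fallback of ``assembling from the $p$-primary factors'' has the same problem. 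If $G=G_1\times G_2$, neither $Z_0$ nor $E,F$ need respect this product structure, and there is no mechanism offered for combining structured sets $I_1,I_2$ and low-dimensional witnesses $E'_1,F'_1,E'_2,F'_2$ into a single $I,E',F'$ for $G$.

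For the prime case itself, your outline has the right architecture --- a joint regularity/energy-increment step on coordinates, followed by a counting step on the pseudorandom pieces --- and you correctly flag that the increment must be run simultaneously on $E$ and $F$. What is missing is the counting step: you need a lemma saying that if $E,F\subset\Z_p^n$ are each dense and sufficiently pseudorandom (in the sense that restricting any bounded set of coordinates barely changes the density), then $E+F$ must meet $Z_0^n$. Your sketch with two fixed elements $z_0,z_1\in Z_0$ and a coordinate-by-coordinate Fourier argument does not obviously deliver this; the difficulty is that pseudorandomness in the ``bounded-restriction'' sense does not by itself control high-degree Fourier coefficients. The paper obtains this step as a black box from the Ha\.z\l a--Holenstein--Mossel theorem on product-space correlation: one checks that the correlation $\rho(\mathcal{P})<1$ (this is where $|Z_0|\ge 2$, or in general the coset hypothesis, enters), and their result then gives $\E[1_E(X)1_F(Y)]\ge c>0$ for $(X,Y)$ drawn from $\mathcal{P}^{\otimes n}$. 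Without an equivalent of this lemma, your iteration has no engine.
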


The assumption on $Z_0$ cannot be weakened,
as if $Z_0$ is contained in a coset of some strict subgroup $H$ of $G$
then under the projection $\pi:G \to G/H$ we have $|\pi(Z_0)| \le 1$,
which leads to a lack of structure similarly to the case $|Z_0|=1$
(see Example \ref{counterexample for two sets} for details).

Finally, we give a further extension of Theorem \ref{Main result, groups case} that considers several summands.

\begin{theorem} \label{Main result, groups case, several sets}

Let $d \ge 2$ be an integer, let $G$ be a finite abelian group, and let $\e > 0$.  
Then there exists $C=C(d,G,\e)$ such that 
if $Z_0 \subset G$ is not contained in any strict coset in $G$ 
and $E_1,\dots,E_d$ are subsets of $G^n$ with $(E_1+\dots+E_d) \cap Z_0^n = \emptyset$ 
then there exist $I \subset [n]$ with $0<|I|<C$ and subsets $E_1',\dots,E_d'$ of $G^{I}$ satisfying 
\[ |E_j \setminus (E_j' \times G^{I^c})| \le \e |G^n| \text{ for all } j \in [d],
\quad (E_1'+\dots+E_d') \cap Z_0^I = \emptyset. \]

\end{theorem}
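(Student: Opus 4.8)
The plan is to reduce Theorem~\ref{Main result, groups case, several sets} to the two-summand case (Theorem~\ref{Main result, groups case}) by an induction on $d$, grouping the last $d-1$ summands together. First I would set $F = E_2 + \dots + E_d$, so that $(E_1 + F) \cap Z_0^n = \emptyset$; applying Theorem~\ref{Main result, groups case} gives a bounded set $I \subset [n]$ and sets $E_1', F' \subset G^I$ with $E_1$ almost contained in $E_1' \times G^{I^c}$, with $F$ almost contained in $F' \times G^{I^c}$, and with $(E_1' + F') \cap Z_0^I = \emptyset$. The subtlety is that $F$ being almost contained in a cylinder over $I$ does not immediately tell us that each $E_j$ ($j \ge 2$) is; and conversely we cannot simply apply the inductive hypothesis to $E_2, \dots, E_d$ on their own, because they need not have a cube-avoiding sumset. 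The resolution is to pass to restrictions: since $F = E_2 + \dots + E_d$ is dense (as each $E_j$ is dense, a point to be quantified carefully) and almost lies in the cylinder $F' \times G^{I^c}$, for most fixings $y \in G^I$ of the $I$-coordinates of the $E_j$'s, the corresponding restricted sets $E_j^{(y)} \subset G^{I^c}$ are still dense in $G^{I^c}$ and their sumset lies in $F'_y := \{z \in G^{I^c} : (y,z) \in F\}$; the key structural input we extract is that $F'_y$ avoids $Z_0^{I^c}$ whenever $y \notin Z_0^I$ adds suitably — more precisely, one wants that for some fixing $y$ the restricted sumset $E_2^{(y)} + \dots + E_d^{(y)}$ avoids $Z_0^{I^c}$.

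Here is the cleaner route I would actually take. Rather than recursing directly, I would prove the $d$-summand statement by a single application of the $d=2$ theorem combined with a restriction/density argument, iterated. Step one: apply Theorem~\ref{Main result, groups case} with $E_1$ and $F := E_2 + \dots + E_d$ to obtain $I_1$ and the cylinder structure for $E_1$, and the fact that $(E_1' + F') \cap Z_0^{I_1} = \emptyset$ where $F'$ is the ``shadow'' of $F$ on $I_1$. Step two: observe that since each $E_j$ for $j \ge 2$ is dense, so is $F$, hence $F$ meets the cylinder $F' \times G^{I_1^c}$ in a dense set, so $F'$ is a dense subset of $G^{I_1}$; moreover for a $(1-\e')$-fraction of $y \in G^{I_1}$, writing $E_j^{(y)}$ for the $y$-fibre of $E_j$ over coordinates $I_1$, the sets $E_2^{(y)}, \dots, E_d^{(y)}$ are each dense in $G^{I_1^c}$ and $E_2^{(y)} + \dots + E_d^{(y)} \subseteq F_y$. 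Step three: for each such good $y$, we would like $F_y$ (equivalently the sumset of the fibres) to avoid $Z_0^{I_1^c}$ so that we may apply the inductive hypothesis in dimension $n - |I_1|$ to $E_2^{(y)}, \dots, E_d^{(y)}$. This requires choosing $y$ so that $y \notin -E_1' + Z_0^{I_1}$ coordinatewise in the appropriate sense; the density of $E_1'$ needs to be bounded away from $1$ for this to be possible, which in turn follows because $(E_1' + F') \cap Z_0^{I_1} = \emptyset$ and $F'$ is dense — so $E_1'$ cannot be everything, and in fact $E_1'$ must miss a positive fraction of translates. From such a $y$ one obtains, by induction, a bounded $J \subset I_1^c$ and cylinder structure for $E_2^{(y)}, \dots, E_d^{(y)}$ over $J$, with the restricted shadows having a $Z_0^J$-avoiding sumset. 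Step four: transfer this back: set $I = I_1 \cup J$, and check that $E_2, \dots, E_d$ are (on average over the remaining fibres, hence after a further averaging/pigeonhole) almost contained in cylinders over $I$, and that the shadows $E_1', \dots, E_d'$ on $I$ have $(E_1' + \dots + E_d') \cap Z_0^I = \emptyset$.

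A few technical points merit care. The densities degrade at each of the $d-1$ inductive steps and under each restriction, so I would run the induction with an explicitly tracked density parameter $\delta$ and error $\e$, choosing the constants in a top-down fashion (the final $C$ is a tower of height $d$ in the constant from Theorem~\ref{Main result, groups case}); crucially, at each stage the ``good fibre'' $y$ must be chosen simultaneously to keep $d-1$ restricted sets dense and to make the restricted sumset cube-avoiding, which is where I expect the main obstacle to lie. The cube-avoidance of the restricted sumset is the heart of the matter: one has $E_1' + (E_2' + \dots + E_d') $ avoiding $Z_0^{I_1}$ at the shadow level, but to descend one needs, for the chosen $y$, that the $I_1^c$-part of the original sumset still avoids $Z_0^{I_1^c}$. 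This holds provided the $I_1$-coordinates of $e_1 + e_2 + \dots + e_d$ can be forced to lie outside $Z_0^{I_1}$, i.e.\ provided $y$ avoids $(Z_0 - E_1')^{I_1}$-type sets; since $E_1'$ has density bounded below $1$ (as argued from $(E_1' + F') \cap Z_0^{I_1} = \emptyset$ with $F'$ dense, using that $Z_0$ is not in a strict coset so $|Z_0| \ge 2$ and a coordinatewise counting bound), such $y$ exist in positive proportion, and intersecting with the ``dense fibre'' condition keeps a positive proportion. Finally I would verify that the hypothesis ``$Z_0$ not contained in a strict coset'' is preserved — it is, since it is a statement purely about $Z_0 \subset G$ and does not change — so the inductive hypothesis applies verbatim. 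The base case $d = 2$ is exactly Theorem~\ref{Main result, groups case}.
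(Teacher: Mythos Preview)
Your inductive reduction to $d=2$ has a genuine gap at precisely the step you yourself flag as ``the heart of the matter''. After applying Theorem~\ref{Main result, groups case} to $E_1$ and $F=E_2+\dots+E_d$ you obtain $I_1,E_1',F'$ with $(E_1'+F')\cap Z_0^{I_1}=\emptyset$, and you then seek a fixing of the $I_1$-coordinates for which the restricted $(d-1)$-fold sumset avoids $Z_0^{I_1^c}$. Your justification for this is backwards. Choosing $y$ so that $e_1'+y\notin Z_0^{I_1}$ for every $e_1'\in E_1'$ (which any $y\in F'$ achieves) means the $I_1$-part of $e_1+\dots+e_d$ already lies \emph{outside} $Z_0^{I_1}$, so the global hypothesis $(E_1+\dots+E_d)\cap Z_0^n=\emptyset$ imposes \emph{no} constraint whatsoever on the $I_1^c$-part; in particular it does not force the restricted sumset out of $Z_0^{I_1^c}$. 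To constrain the $I_1^c$-part one would need the $I_1$-part to land \emph{inside} $Z_0^{I_1}$, which is exactly what $(E_1'+F')\cap Z_0^{I_1}=\emptyset$ forbids. The natural repair --- include a fibre of $E_1$ as well and choose $y_1,\dots,y_d$ with $y_1+\dots+y_d\in Z_0^{I_1}$ --- does yield a cube-avoiding restricted problem, but now with $d$ summands again, so the induction does not advance. There is a second, independent gap in Step four: even granting a single fibre tuple satisfying the inductive hypothesis and producing some $J\subset I_1^c$, that $J$ depends on the chosen fibre, and structure of one fibre says nothing about the full sets $E_2,\dots,E_d$ over $J$; the ``averaging/pigeonhole'' you allude to cannot recover a common bounded $J$.

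For comparison, the paper does not induct on $d$ at the level of the theorem. It repeats the two-summand strategy directly: a simultaneous regularity lemma for $d$ sets (Lemma~\ref{Simultaneous regularity lemma for d sets}) finds a bounded $I$ with most fibres of every $E_j$ pseudorandom, and a $d$-summand analogue of Proposition~\ref{Obtaining a non-empty intersection} (Proposition~\ref{Obtaining a non-empty intersection for d sets}) shows that $d$ dense pseudorandom sets contain many tuples summing into $Z_0^m$. The latter is deduced from the $d$-variable Ha\k{z}\l{}a--Holenstein--Mossel theorem; the only substantive work is verifying $\rho(\mathcal{P})<1$ for the uniform distribution on $d$-tuples in $G^d$ summing into $Z_0$, and \emph{that} check genuinely does reduce to $d=2$, by conditioning on $d-2$ of the coordinates.
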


In Section \ref{Section: The case of two subsets of F_p^n} we will prove Theorem \ref{Main result}. 
Section \ref{Section: Extending to several subsets of finite abelian groups} discusses 
the necessary modifications to prove its two successive generalisations, 
Theorem \ref{Main result, groups case} and Theorem \ref{Main result, groups case, several sets}. 
The final Section \ref{Section: Further discussion} contains some potential further generalisations and remaining open questions.

\subsection{Notation and conventions}

If $X$ is a finite set, $I$ is a subset of $[n]$, and $E$ is a subset of $X^I$, then we refer to the ratio $|E|/|X^I|$ as the \emph{density of $E$ inside $X^I$}. Usually there will be no ambiguity as to which set $X^I$ is being considered and we will denote this density by $d(E)$. If $y$ is an element of $X^I$ and $E$ is a subset of $X^n$, then we write $E_{I \to y}$ for the set of points $x \in E$ such that $x_i = y_i$ for every $i \in I$. Although the set $E_{I \to y}$ is a subset of $\Z_p^n$ rather than a subset of $\Z_p^{I^c}$, we can also view it as a subset of $\Z_p^{I^c}$, and the notation $d(E_{I \to y})$ will always refer to the density of $E_{I \to y}$ as a subset of $\Z_p^{I^c}$. If $I,J$ are disjoint subsets of $[n]$, $y$ and $z$ are elements of $X^I$ and $X^J$ respectively, and $E$ is a subset of $\Z_p^n$, then we write $E_{I \to y, J \to z}$ for the set $E_{I \cup J, w}$ where $w$ is the element of $X^{I \cup J}$ defined by $w_i = y_i$ for each $i \in I$ and $w_i = z_i$ for each $i \in J$.

\section{Two summands and prime modulus} \label{Section: The case of two subsets of F_p^n}

In the present section we prove Theorem \ref{Main result}. 
The two main technical ingredients of the proof are 
(i) a simultaneous regularity lemma for two sets, and
(ii) a result of Ha\k{z}\l{}a, Holenstein and Mossel on product space models of correlation.
We present the first ingredient in the first subsection, which is a minor modification of an existing result,
although we include a proof for the convenience of the reader.
In the second subsection we prove Theorem \ref{Main result}
assuming a lemma on summands in dense pseudorandom sets.
We then conclude the proof in the third subsection by using the second
ingredient mentioned above to prove this lemma.

\subsection{Pseudorandom sets and a simultaneous regularity lemma} 

In our context, a regularity lemma is a result on decomposing any subset of a product set $X^n$
into a bounded number of pieces, most of which are pseudorandom, in the following sense.

\begin{definition} 
Let $X$ be a finite set, let $r$ be a nonnegative integer and let $\b>0$. 
We say that a subset $E$ of $X^n$ is \emph{$(r,\b)$-pseudorandom} 
if for any subset $I$ of $[n]$ with size at most $r$ and every $y \in X^I$ we have $|d(E_{I \to y}) - d(E)| \le \b$. 
\end{definition}

The following formulation is similar to that in \cite[Lemma 3.2]{Keevash Lifshitz Long Minzer}, 
with the slight complication of simultaneously decomposing two sets;
the required modification of the proof is straightforward,
but we give the details for the convenience of the reader.

\begin{lemma} \label{Simultaneous regularity lemma} 
Let $X$ be a finite set, let $r$ be a nonnegative integer, and let $\b,\a>0$. 
Then there exists $C = \mathrm{Psr}_2(X,r,\b,\a)$ such that 
for any subsets $E,F$ of $X^n$ there exists $I \subset [n]$ with $0<|I|<C$
which simultaneously satisfies 
\begin{align} \P_{y \in X^I} (E_{I \to y} & \text{ not } (r,\b) \text{-pseudorandom}) \le \a, \label{first pseudorandomness inequality}\\
\P_{y \in X^I} (F_{I \to y} & \text{ not } (r,\b) \text{-pseudorandom}) \le \a. \label{second pseudorandomness inequality} \end{align} 
\end{lemma}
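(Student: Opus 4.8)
The plan is a standard energy-increment argument, refining $I$ until both pseudorandomness conditions hold, with the mean-square fibre density as potential. For $I \subseteq [n]$ put $\Phi_E(I) = \E_{y \in X^I} d(E_{I \to y})^2$ and likewise $\Phi_F(I)$; both lie in $[0,1]$. Two facts drive the argument. First, for disjoint $I,J$ one has the identity
\[ \Phi_E(I \cup J) - \Phi_E(I) = \E_{y \in X^I} \Var_{w \in X^J}\!\bigl(d(E_{I \to y, J \to w})\bigr) \ge 0, \]
since averaging the sub-fibre densities over the $J$-coordinates returns $d(E_{I \to y})$, so $\Phi_E$ (and $\Phi_F$) is monotone non-decreasing under refinement. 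Second, if $E_{I \to y}$ is not $(r,\beta)$-pseudorandom, witnessed by $J_y \subseteq I^c$ with $|J_y| \le r$ and $z_y \in X^{J_y}$ with $|d(E_{I \to y, J_y \to z_y}) - d(E_{I \to y})| > \beta$, then for \emph{any} $J \supseteq J_y$ the law of total variance bounds $\Var_{w \in X^J}(d(E_{I \to y, J \to w}))$ below by the variance over $X^{J_y}$ alone, which is at least $|X|^{-|J_y|}\beta^2 \ge |X|^{-r}\beta^2$ because the value $d(E_{I \to y, J_y \to z_y})$, attained with probability $|X|^{-|J_y|}$, deviates from the mean $d(E_{I \to y})$ by more than $\beta$.

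First I would initialise $I := \{1\}$ (so $|I|>0$ throughout; the case $n=0$ is trivial), and then iterate the following step. If \eqref{first pseudorandomness inequality} fails for the current $I$, let $B$ be the set of $y \in X^I$ for which $E_{I \to y}$ is not $(r,\beta)$-pseudorandom, so $\P_{y \in X^I}(y \in B) > \alpha$; choose a witness $J_y$ for each $y \in B$ as above, and replace $I$ by $I \cup \bigcup_{y \in B} J_y$. If instead \eqref{second pseudorandomness inequality} fails, do the same with $F$ in place of $E$. By the second fact applied in every fibre of $B$ together with the identity above, this step increases the relevant potential, hence $\Phi_E(I) + \Phi_F(I)$, by at least $\delta := \alpha |X|^{-r}\beta^2$, while the other potential does not decrease by monotonicity. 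Since $\Phi_E + \Phi_F \le 2$, the process halts after at most $2/\delta$ steps, at which point both \eqref{first pseudorandomness inequality} and \eqref{second pseudorandomness inequality} hold.

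The remaining point, which I expect to be the only real subtlety, is bounding $|I|$ independently of $n$. A step starting from a set of size $m$ has at most $|X|^m$ fibres, each contributing a witness of size at most $r$, so it enlarges $I$ by at most $r|X|^m$ coordinates; since there are at most $2/\delta$ steps, iterating this bound keeps $|I|$ below a tower-type function of $r$, $|X|$ and $1/\delta$ — in particular below a constant $C = \mathrm{Psr}_2(X,r,\beta,\alpha)$ not depending on $n$. (If $n$ is smaller than this bound one may simply take $I = [n]$, whose fibres are singletons and hence vacuously pseudorandom.) Taking $\mathrm{Psr}_2(X,r,\beta,\alpha)$ to be that constant plus one yields $0 < |I| < C$ and finishes the proof. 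The care needed here is exactly the interplay between two scales: a single bounded-size $J$ need not help a positive fraction of the bad fibres at once — which is why we add the union of \emph{all} witnesses — and the consequent (possibly large) per-step growth of $I$ is controlled only because the energy bound caps the number of steps.
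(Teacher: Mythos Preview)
Your proposal is correct and follows essentially the same energy-increment argument as the paper: refine $I$ by adjoining all witnesses from the bad fibres, use the mean-square fibre density as a potential bounded by $2$ that jumps by at least $\alpha|X|^{-r}\beta^2$ at each step, and bound $|I|$ by iterating the per-step growth $r|X|^{|I|}$ over the bounded number of steps. The only cosmetic differences are that the paper phrases monotonicity via Cauchy--Schwarz rather than the variance identity, and it ensures $I\ne\emptyset$ by forcing an initial step rather than seeding with $I=\{1\}$.
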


\begin{proof} 

We construct the set $I$ using an inductive process. 
Let us begin by ignoring the requirement that $I$ is non-empty
(which will be easy to also ensure).
For a positive integer $s$, at the $s$th iteration we proceed as follows, unless we have stopped before then.

Having obtained a set $I_s$, if the inequalities \eqref{first pseudorandomness inequality} and \eqref{second pseudorandomness inequality} hold for $I=I_s$ then we stop. 
Let us now assume that \eqref{first pseudorandomness inequality} fails 
(if instead \eqref{second pseudorandomness inequality} fails, then we proceed similarly with $E$ replaced by $F$). 
We then define the set \[E_s^{\mathrm{psr}}= \{y \in X^{I_s}: E_{I_s \to y} \text{ not } (r,\b) \text{-pseudorandom} \}.\] 
For each $y \in E_s^{\mathrm{psr}}$ we can find some $I_{s+1,y} \subset I_s^c$ 
with size at most $r$ satisfying \[|d(E_{I_s \to y, I_{s+1,y} \to z}) - d(E_{I_s \to y})| > \b\] for some $z \in I_{s+1,y}$.
To complete the inductive step we set \[I_{s+1} = I_s \cup \bigcup_{y \in E_s^{\mathrm{psr}}} I_{s+1,y}.\] 

We next show that the process terminates after a number of iterations that is bounded above depending on $p,r,\b,\a$ only. 
We will do so by an energy-increment argument. For each step $s$ of the induction we define the energies 
\[ \ce_s(E)  = \E_{y \in X^{I_s}} d(E_{I_s \to y})^2, \qquad
\ce_s(F)  = \E_{y \in X^{I_s}} d(F_{I_s \to y})^2. \]
The sets $I_s$ constitute an increasing sequence (with respect to inclusion), 
so the sequences $\ce_s(E)$ and $\ce_s(F)$ are nondecreasing by the Cauchy-Schwarz inequality. 
If some step $s$ of the induction involves $E$, then we may lower bound the difference \[\ce_{s+1}(E) - \ce_{s}(E) = \E_{y \in X^{I_s}} \left( \E_{z \in X^{I_{s+1} \setminus I_s}} d(E_{I_s \to y, I_{s+1} \setminus I_s \to z})^2 - d(F_{I_s \to y})^2 \right)\] by interpreting for every $y \in X^{I_s}$ the inner expectation as the variance of the variable $d(E_{I_s \to y, I_{s+1} \setminus I_s \to z})$ when $z$ is chosen uniformly at random in $X^{I_{s+1} \setminus I_s}$. For every $y \in E_s^{\mathrm{psr}}$, the variance of the variable $d(E_{I_s \to y, I_{s+1} \setminus I_s \to z})$ when $z$ is chosen uniformly at random in $X^{I_{s+1} \setminus I_s}$ is again (by the Cauchy-Schwarz inequality) at least the variance of the variable $d(E_{I_s \to y, I_{s+1,y} \to z})$ when $z$ is chosen uniformly at random in $X^{I_{s+1,y}}$, which is always at least $p^{-r} \b^2$ by definition of $E_s^{\mathrm{psr}}$.
Hence we obtain the lower bound \[\ce_{s+1}(E) - \ce_{s}(E) \ge p^{-r} \a \b^2,\] which is independent of $s$. 
Thus the sum of the energies $\ce_{s}(E) + \ce_{s}(F)$ increases by at least $p^{-r} \a \b^2$ at each step, and since it is always bounded above by $2$, the number of steps is at most $2 p^r \a^{-1} \b^{-2}$. At every step, the set $I_{s+1} \setminus I_s$ has size bounded above by $p^{|I_s|} r$.

To ensure that $I$ is non-empty, at the very first iteration we continue (rather than stop) regardless of whether \eqref{first pseudorandomness inequality} or \eqref{second pseudorandomness inequality} holds, and for that iteration the difference $(\ce_1(E) + \ce_1(F)) -  (\ce_0(E) + \ce_0(F))$ is lower bounded by $0$ rather than by $p^{-r} \a \b^2$. \end{proof}

\subsection{Reducing to pseudorandom summands}

Here we will reduce the structure theorem to the following proposition on pseudorandom dense summands $E$ and $F$,
showing that not only does $E+F$ meet $Z_0^n$ but $E \times F$ contains a positive proportion of the pairs with sum in $Z_0^n$. In what follows, 
we will continue to refer to subsets $E,F \subset \Z_p^n$ for simplicity of notation, but the upcoming considerations 
will in fact be applied to subsets $E_{I \to x'}$ and $F_{I \to y'}$ of $\Z_p^{I^c}$ 
obtained by applying Lemma \ref{Simultaneous regularity lemma}.

\begin{proposition} \label{Obtaining a non-empty intersection} 
Let $p$ be a prime, let $\e>0$ and let $Z_0$ be a subset of $\Z_p$ with $|Z_0|>1$.
Let $S = \{ (x,y) \in \Z_p^n \times \Z_p^n: x+y \in Z_0^n \}$.
Then there exist $\b>0$, $c>0$, and a positive integer $r$ such that 
whenever $E,F$ are $(r,\b)$-pseudorandom subsets of $\Z_p^n$ with density at least $\e$
we have $|(E \times F) \cap S| \ge c|S|$.
In particular, there exist $x \in E$, $y \in F$ satisfying $x+y \in Z_0^n$.
\end{proposition}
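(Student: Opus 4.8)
The plan is to count, inside the product space $(\Z_p^n \times \Z_p^n, \text{uniform})$, the probability that a uniformly random pair $(x,y)$ with $x+y \in Z_0^n$ satisfies $x \in E$ and $y \in F$, and to show this is bounded below by a positive constant whenever $E,F$ are sufficiently pseudorandom and dense. The natural framework is to view $S$ itself as a product probability space: writing $W = \{(a,b) \in \Z_p \times \Z_p : a+b \in Z_0\}$, the uniform measure on $S$ is the $n$-fold product $W^{\otimes n}$, and $(E \times F) \cap S$ has measure $\mathbb{P}_{(x,y) \sim W^{\otimes n}}(x \in E, y \in F)$. Thus we want a lower bound on the probability, under the correlated product measure, that the first marginal lands in $E$ and the second lands in $F$, given only that $E,F$ are dense and pseudorandom under the respective uniform marginals (which are uniform on $\Z_p$ on each coordinate, since $|Z_0|>1$ guarantees every element of $\Z_p$ appears in some pair of $W$ — indeed $W$ projects onto all of $\Z_p$ in each coordinate).

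First I would verify that the marginal of $W$ on each factor is uniform on $\Z_p$: for the first coordinate, $a$ is uniform because for each $a$ there are exactly $|Z_0|$ choices of $b$; similarly for the second. Hence $E$ (resp. $F$), pseudorandom and dense as a subset of $\Z_p^n$ with the uniform measure, is exactly a dense pseudorandom set with respect to the first (resp. second) marginal of $W^{\otimes n}$. Next I would invoke the result of Ha\k{z}\l{}a, Holenstein and Mossel on product space models of correlation — this is the second main ingredient advertised in the section introduction — which says, roughly, that in a product space with a connected/non-degenerate correlation structure, dense low-complexity (here: pseudorandom) sets on the two sides must have correlated indicator functions, i.e. $\mathbb{P}(x \in E, y \in F) \ge$ some function of the densities, provided the bipartite "correlation graph" on $W$ is connected. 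The connectivity of $W$ as a bipartite graph on $\Z_p \sqcup \Z_p$ is the key combinatorial fact to check: since $|Z_0| > 1$, pick distinct $z_0, z_1 \in Z_0$; then from any $a$ we reach $b = z_0 - a$ and $b = z_1 - a$, and from $b = z_0 - a$ we reach $a' = z_1 - b = a + (z_1 - z_0)$, so the reachability relation on the first coordinate contains the shift by the nonzero element $z_1 - z_0$, which generates $\Z_p$ since $p$ is prime — hence $W$ is connected. I would then choose $r$ and $\b$ (and the resulting $c$) as the parameters output by the Ha\k{z}\l{}a--Holenstein--Mossel statement applied to the connected correlation space $W$ with density threshold $\e$; the "pseudorandom" hypothesis supplies exactly the low-influence / low-complexity condition that theorem requires.

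The main obstacle I expect is matching the precise hypotheses of the Ha\k{z}\l{}a--Holenstein--Mossel theorem to the notion of $(r,\b)$-pseudorandomness used here: that theorem is typically phrased in terms of small low-degree Fourier mass or small influences of the indicator functions, rather than in terms of the combinatorial restriction-density condition in the paper's definition of pseudorandom. So the real work is a translation lemma showing that an $(r,\b)$-pseudorandom set of density $\ge \e$, for suitable $r = r(p,\e)$ and $\b = \b(p,\e)$, has its indicator function close (in $L^2$, after subtracting the mean) to a function with the smallness property the theorem needs — essentially that the "non-trivial" part of $\mathbbm 1_E$ has bounded effective dimension or small tail — and then handling the resulting error terms in the correlation inequality so that they do not swamp the main positive contribution. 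Once that translation is in hand, the lower bound $|(E \times F) \cap S| \ge c|S|$ with $c>0$ depending only on $p$ and $\e$ follows, and the final sentence ("in particular there exist $x \in E$, $y \in F$ with $x+y \in Z_0^n$") is immediate since $c|S| > 0$.
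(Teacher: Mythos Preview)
Your approach is essentially the paper's: set up the correlated product distribution on pairs $(x,y)$ with $x+y\in Z_0$, check the non-degeneracy condition (you phrase it as connectivity of the bipartite graph $W$; the paper phrases it as $\rho(\mathcal{P})<1$ via Lemma~\ref{Characterisation of rho = 1}, but the underlying computation---using two distinct elements of $Z_0$ and the fact that their difference generates $\Z_p$---is the same), and then invoke Ha\k{z}\l{}a--Holenstein--Mossel. Your anticipated ``main obstacle'' does not arise: the version of the HHM theorem quoted in the paper (Theorem~\ref{two-variable case of Theorem 7.1}) is already stated with precisely the $(r,\beta)$-pseudorandomness hypothesis, so no translation lemma between pseudorandomness and low-influence/Fourier conditions is needed.
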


Let us now explain how Theorem \ref{Main result} follows from Lemma \ref{Simultaneous regularity lemma} and Proposition \ref{Obtaining a non-empty intersection}.

\begin{proof} [Proof of Theorem \ref{Main result}]
Let $\e>0$, and let $E,F$ be subsets of $\Z_p^n$ satisfying $(E+F) \cap Z_0^n = \emptyset$. 
First we consider the trivial case that one of $E$ or $F$ is sparse, say $d(E) \le \e$. 
Then taking $I=\{1\}$ (say), $E' = \emptyset$ and $F'= \Z_p$ satisfies all three required conclusions. 
Likewise if $d(F) \le \e$ by exchanging the roles of $E$ and $F$, so we may now assume both $d(E) \ge \e$ and $d(F) \ge \e$.

We then let $r$, $\b$ be the parameters given by Proposition \ref{Obtaining a non-empty intersection} for $p$,$Z_0$,$\e/2$. 
Applying Lemma \ref{Simultaneous regularity lemma} with $X=\Z_p$, these $r,\b$ and $\a = \e/2$, 
we obtain a non-empty subset $I$ of $[n]$ with size at most $\mathrm{Psr}_2(\Z_p,r, \b, \e/2)$ 
satisfying both \eqref{first pseudorandomness inequality} and \eqref{second pseudorandomness inequality}. We then consider the sets 
\begin{align*} E^{\mathrm{psr}} & = \{x' \in \Z_p^I: E_{I \to x'} \text{ is } (r,\b)\text{-pseudorandom}\},
& E' = \{x' \in E^{\mathrm{psr}}: d(E_{I \to x'}) > \e/2\}, \\
F^{\mathrm{psr}} & = \{y' \in \Z_p^I: F_{I \to y'} \text{ is } (r,\b)\text{-pseudorandom}\},
& F' = \{y' \in F^{\mathrm{psr}}: d(F_{I \to y'}) > \e/2\}. \end{align*} 
We note that any $(x',x'')$ in $E \setminus (E' \times \Z_p^{I^c})$
satisfies $x' \in (E^{\mathrm{psr}})^c$ or $d(E_{I \to x'}) \le \e/2$,
so the density of such $(x',x'')$ in $\Z_p^n$ is at most $\a+\e/2=\e$,
as required for the first conclusion of Theorem \ref{Main result}.
Similarly,  $F \setminus (F' \times \Z_p^{I^c})$ has density at most $\e$.

It remains to show that  $(E'+F') \cap Z_0^I = \emptyset$.
To see this, suppose on the contrary that we have
$(x',y') \in E' \times F'$ with  $x'+y' \in Z_0^I$.
By definition of $E'$ and $F'$ we can apply  
Proposition \ref{Obtaining a non-empty intersection} 
to $E_{I \to x'}$ and $F_{I \to y'}$, thus obtaining
$(x'',y'') \in E_{I \to x'} \times F_{I \to y'}$ with $x''+y'' \in Z_0^{I^c}$.
However, we then have $x=(x',x'') \in E$ and $y=(y',y'') \in F$ with $x+y \in Z_0^n$.
This contradicts our assumption on $(E,F)$,
so  $(E'+F') \cap Z_0^I = \emptyset$, as required.
\end{proof}

\subsection{Pseudorandom summands via correlation}

To complete the proof of Theorem \ref{Main result}, 
it remains to establish Proposition \ref{Obtaining a non-empty intersection},
which we will deduce from a result of Ha\k{z}\l{}a, Holenstein and Mossel \cite{Hazla Holenstein Mossel}. 
We will state it in the simplified setting of two correlated variables,
deferring the more general setting to Section \ref{Section: Extending to several subsets of finite abelian groups}
where we will use it extended to more summands.

Let $(U,V)$ be random variables following some probability distribution $\mathcal{P}$ on $\Omega^2$,
where $\Omega$ is some finite set. The \emph{correlation of $\mathcal{P}$} is 
\[\rho(\mathcal{P})= \sup\{\Cov(\ll(U),\s(V)) : \Var \ll(U) = \Var \s(V) = 1 \},\] 
where the supremum is over functions $\ll,\s: \Omega \to \R$ with $\Var \ll(U) = \Var \s(V) = 1$.
We have $\rho(\mathcal{P}) \le 1$ by the Cauchy-Schwarz inequality,
and we may characterise the equality case as follows.

\begin{lemma} \label{Characterisation of rho = 1}
Let $\Omega$ be a finite set and $(U,V)$ be random variables 
following some probability distribution $\mathcal{P}$ on $\Omega^2$.
Then  $\rho(\cp) = 1$ if and only if there exist non-constant $\ll,\s: \Omega \to \R$ such that $\ll(U)=\s(V)$ with probability $1$.
\end{lemma}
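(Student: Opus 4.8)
The plan is to prove the two implications separately: the ``if'' direction is immediate, while the ``only if'' direction combines a compactness argument, showing that the supremum defining $\rho(\mathcal{P})$ is attained, with the equality case of the Cauchy--Schwarz inequality. For the ``if'' direction, suppose $\lambda,\sigma:\Omega\to\R$ are non-constant with $\lambda(U)=\sigma(V)$ almost surely. Then $\lambda(U)$ and $\sigma(V)$ are the same random variable, of some positive variance $v$, and replacing $\lambda,\sigma$ by $v^{-1/2}(\lambda-\E\lambda(U))$ and $v^{-1/2}(\sigma-\E\sigma(V))$ preserves the relation $\lambda(U)=\sigma(V)$ while arranging $\Var\lambda(U)=\Var\sigma(V)=1$; then $\Cov(\lambda(U),\sigma(V))=\Var\lambda(U)=1$, so $\rho(\mathcal{P})\ge 1$, which together with the bound $\rho(\mathcal{P})\le 1$ noted above gives $\rho(\mathcal{P})=1$.

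For the ``only if'' direction, assume $\rho(\mathcal{P})=1$. First I would observe that the supremum is unchanged if one additionally imposes $\E\lambda(U)=\E\sigma(V)=0$, since centring affects neither the variances nor the covariance, and that for centred functions $\Cov(\lambda(U),\sigma(V))=\E[\lambda(U)\sigma(V)]$. After deleting from $\Omega$ any points that have probability zero under the corresponding marginal (which changes nothing), the forms $\lambda\mapsto\E\lambda(U)^2$ and $\sigma\mapsto\E\sigma(V)^2$ are positive definite, so the sets $K_U=\{\lambda:\E\lambda(U)=0,\ \E\lambda(U)^2=1\}$ and $K_V=\{\sigma:\E\sigma(V)=0,\ \E\sigma(V)^2=1\}$ are compact subsets of the finite-dimensional space $\R^\Omega$. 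The bilinear functional $(\lambda,\sigma)\mapsto\E[\lambda(U)\sigma(V)]$ is continuous, hence attains its supremum $\rho(\mathcal{P})=1$ over the compact set $K_U\times K_V$: there are $\lambda_0\in K_U$, $\sigma_0\in K_V$ with $\E[\lambda_0(U)\sigma_0(V)]=1$.

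Finally I would feed the optimisers into Cauchy--Schwarz:
\[
1=\E[\lambda_0(U)\sigma_0(V)]=\Cov(\lambda_0(U),\sigma_0(V))\le\big(\Var\lambda_0(U)\big)^{1/2}\big(\Var\sigma_0(V)\big)^{1/2}=1,
\]
so equality holds; since the covariance is $+1$, the centred variables $\lambda_0(U)$ and $\sigma_0(V)$ are positively proportional, and matching their unit variances forces $\lambda_0(U)=\sigma_0(V)$ almost surely. Both $\lambda_0$ and $\sigma_0$ are non-constant because their variances equal $1$, and (extending them arbitrarily over any deleted points of $\Omega$, which keeps them non-constant) this produces the required pair.

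The step I expect to require the most care is the compactness/attainment claim: the raw constraint set $\{\Var\lambda(U)=1\}$ is invariant under adding a constant to $\lambda$ and is therefore unbounded, so one really must first normalise to centred functions and pass to the support of each marginal before the supremum is automatically attained; once that is in place, the remainder is a direct invocation of the Cauchy--Schwarz equality case.
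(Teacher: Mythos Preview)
Your proof is correct and follows essentially the same approach as the paper: centre the functions, use compactness (via finiteness of $\Omega$) to realise the supremum as a maximum, and then read off the conclusion from the equality case of Cauchy--Schwarz; the converse direction is likewise identical. If anything you are more careful than the paper on the compactness step, explicitly restricting to the supports of the marginals so that the unit-variance constraint set is genuinely bounded, whereas the paper simply asserts compactness ``since the set $\Omega$ is finite.''
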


\begin{proof}
Since the quantities $\Cov(\ll(U),\s(V)), \Var \ll(U), \Var \s(V)$ are unchanged after subtracting constants from $\ll$ and $\s$, 
we can view $\rho(\cp)$ as the supremum of $\Cov(\ll(U),\s(V))$ over all $\ll,\s$ with expectation $0$ and variance $1$. 
This supremum is a maximum, as the set of such pairs $(\ll,\s)$ is compact (since the set $\Omega$ is finite).
We consider $(\ll,\s)$ attaining the maximum and apply the Cauchy-Schwarz inequality
 \[  \rho(\cp) = \Cov(\ll(U),\s(V))^2 \le \Var \ll(U) \Var \s(V) = 1.\] 
If $\rho(\cp) = 1$ then we have equality in this inequality, so  $\ll(U)=\s(V)$ with probability $1$;
this gives the required conclusion, as $\ll,\s$ are non-constant (they have non-zero variance).
Conversely, if we can find non-constant $\ll,\s$ satisfying $\ll(U)=\s(V)$ with probability $1$, then $\Var \ll(U) > 0$, 
so after multiplying $\ll$ and $\s$ by $(\Var \ll(U))^{-1/2}$ we get $\Var \ll(U) = \Var \s(V) = 1$ 
as well as $\Cov(\ll(U),\s(V)) = \Var \ll(U) = 1$, so $\rho(\cp) = 1$. \end{proof}

We now state the required result from \cite{Hazla Holenstein Mossel} and deduce Proposition \ref{Obtaining a non-empty intersection}.

\begin{theorem} \label{two-variable case of Theorem 7.1} [\cite{Hazla Holenstein Mossel}, Theorem 7.1, special case] 
For every $\mu > 0$ and $\rho<1$ there exist a positive integer $r$ and some $\beta, c > 0$ such that the following holds. 
Let $(X,Y)$ be a pair of random variables taking values in $\Omega^n \times \Omega^n$ for some finite set $\Omega$, 
such that the pairs $(X_i,Y_i)$ with $i \in [n]$ follow  the same distribution $\mathcal{P}$ on $\Omega^2$ independently.
Let $f,g: \Omega^n \to \{0,1\}$ satisfying the following three assumptions.
\begin{enumerate}[(i)]
\item The sets \{f=1\} and \{g=1\} are $(r,\b)$-pseudorandom,
\item $\E f(X) \ge \mu$ and $\E g(Y) \ge \mu$,
\item $\rho(\cp) \le \rho$.
\end{enumerate} 
Then we have the lower bound 
\begin{equation} \E f(X) g(Y) \ge c. \label{conclusion of the Hazla Holenstein Mossel result} \end{equation}
\end{theorem}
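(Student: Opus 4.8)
This statement is a two-variable specialisation of the main theorem of Ha\k{z}\l{}a, Holenstein and Mossel \cite{Hazla Holenstein Mossel}, so in the paper one would cite it; here is the strategy by which it is proved. The plan is to work in the correlated-space formalism. Write $\mu_1,\mu_2$ for the two marginals of $\cp$ and let $T\colon L^2(\Omega,\mu_2)\to L^2(\Omega,\mu_1)$ be the associated Markov (averaging) operator, so that $\E f(X)g(Y)=\langle f,\,T^{\otimes n}g\rangle$, the inner product being in $L^2(\mu_1^{\otimes n})$. By the definition of $\rho(\cp)$ as a maximal correlation, $T$ maps the orthogonal complement of the constants into itself with operator norm $\rho(\cp)\le\rho$, and by the tensorisation of maximal correlation $T^{\otimes n}$ contracts the $k$-th level of the Efron--Stein (orthogonal) decomposition of $L^2(\mu_1^{\otimes n})$ by a factor at most $\rho^k$. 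Splitting $f=\E f+f'$ and $g=\E g+g'$ into constant and non-constant parts gives $\E f(X)g(Y)=(\E f)(\E g)+\langle f',T^{\otimes n}g'\rangle\ge\mu^2+\langle f',T^{\otimes n}g'\rangle$, so it suffices to bound $\langle f',T^{\otimes n}g'\rangle$ below by $c-\mu^2$ for a suitable $c=c(\mu,\rho)>0$.

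The next step is a truncation to bounded Efron--Stein degree. Using hypercontractivity for product spaces, one shows that for a degree $d=d(\mu,\rho)$ chosen large enough the contribution to $\langle f',T^{\otimes n}g'\rangle$ of all Efron--Stein levels above $d$ is at least $-\rho^{d+1}\|f'\|_2\|g'\|_2\ge-\rho^{d+1}$, hence at most $\mu^2/4$ in absolute value. It then remains to bound from below the low-degree contribution $\langle f^{\le d}-\E f,\ T^{\otimes n}(g^{\le d}-\E g)\rangle$, where $f^{\le d},g^{\le d}$ are the projections onto Efron--Stein levels $0,\dots,d$.

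For the low-degree part I would pass to a Gaussian model via the invariance principle for correlated spaces (Mossel). That principle applies to functions all of whose low-degree influences are small, and in the Gaussian model the corresponding quantity --- the correlated-Gaussian inner product of two sets of measure at least $\mu/2$ whose effective correlation lies in $[-\rho,\rho]$ --- is bounded below by a positive constant $c_0=c_0(\mu,\rho)$ (by Borell's isoperimetric inequality, or a direct first-moment estimate using $\rho<1$). The obstruction is that an $(r,\b)$-pseudorandom Boolean function need not have small maximal low-degree influence. One circumvents this as follows: for a Boolean function the sum over all coordinates $i$ of the degree-$\le d$ influence of $f$ equals $\sum_{1\le|S|\le d}|S|\,\|f^{=S}\|_2^2\le d\|f\|_2^2\le d$, so all but $O(d/\tau)$ coordinates have degree-$\le d$ influence below $\tau$; collect these boundedly many exceptional coordinates for $f$ and for $g$ into a set $J$ and condition on $(x_J,y_J)$, which is distributed as $\cp^{\otimes J}$. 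Since $f,g$ are $(r,\b)$-pseudorandom with $r$ chosen at least $|J|$, every restriction $f_{J\to x_J}$, $g_{J\to y_J}$ still has density at least $\mu-\b\ge\mu/2$; and on $J^c$ these restrictions now have small low-degree influences, after accounting for the bounded inflation of the effective degree caused by the restriction (absorbed by choosing $r$ large enough, or by iterating this conditioning a bounded number of times). Applying the invariance principle and the Gaussian bound to each restriction and averaging over $(x_J,y_J)$ then yields $\E f(X)g(Y)\ge\tfrac12 c_0-\tfrac14\mu^2-(\text{error})$; choosing $\b,\tau$ small and $r,d$ appropriately in terms of $\mu,\rho$ delivers the asserted $c=c(\mu,\rho)>0$.

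The hard part is the last paragraph: reconciling the weak pseudorandomness hypothesis --- which, unlike a bound on the maximal low-degree influence, does not by itself license the invariance principle --- with the Gaussian comparison, while keeping every parameter independent of $n$. This forces one to isolate the bounded set of coordinates carrying the low-degree influence before invoking invariance, and since restricting coordinates inflates the effective degree, that isolation must be performed with a generous choice of $r$ or iterated, with an energy-increment argument to guarantee termination. Managing this bookkeeping, together with an effective quantitative invariance principle for correlated spaces, is the technical core of \cite{Hazla Holenstein Mossel}.
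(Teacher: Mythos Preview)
The paper does not prove this statement; it quotes it as a special case of \cite[Theorem~7.1]{Hazla Holenstein Mossel} and uses it as a black box in the proof of Proposition~\ref{Obtaining a non-empty intersection}. You correctly recognise this at the outset, and your sketch of the underlying argument goes well beyond what the paper itself supplies.

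The architecture you describe --- Markov operator $T$, tensorised contraction on Efron--Stein levels, degree truncation, invariance principle to pass to a correlated Gaussian model, and a Borell-type lower bound there --- is indeed the skeleton of the argument in \cite{Hazla Holenstein Mossel}. Two remarks. First, the high-degree truncation uses only the operator-norm bound $\rho^k$ on level $k$; hypercontractivity is not needed for that step (it enters elsewhere, in the invariance-principle error terms). Second, and more substantively, your proposed mechanism for reconciling $(r,\beta)$-pseudorandomness with the small-influence hypothesis of the invariance principle --- collect a bounded set $J$ of influential coordinates, condition, and iterate --- is not quite the route taken in \cite{Hazla Holenstein Mossel}; there the resilience hypothesis is used more directly to control the low-degree Efron--Stein weight of $f$ and $g$, so no iterated conditioning is needed. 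Your workaround is plausible but would require its own termination argument (restricting coordinates can create new influential ones, as you note), so as written it is a heuristic rather than a proof. Since you explicitly flag this last paragraph as the technical core and defer it to \cite{Hazla Holenstein Mossel}, this is not a gap so much as a slightly different, and not fully verified, path to the same black box --- which is all the present paper requires.
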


\begin{proof}[Proof of Proposition \ref{Obtaining a non-empty intersection}]
Let $E,F$ be as in Proposition \ref{Obtaining a non-empty intersection}, for some $r,\b,c$ to be determined below.
In Theorem \ref{two-variable case of Theorem 7.1} we take $\Omega$ to be $\Z_p$, take $f=1_E$ and $g=1_F$,
and take $\mathcal{P}$ to be the probability distribution on $\Z_p^2$ 
where each pair $(x,y)$ with $x+y \in Z_0$ has probability $(p|Z_0|)^{-1}$
and all other pairs $(x,y)$ have probability  $0$. 
Setting $\mu = \e$, as $\cp$ has uniform marginals on $\Z_p$
we have $\E f(X) = d(E) \ge \mu$ and $\E g(Y) = d(F) \ge \mu$.

The remaining condition $\rho := \rho(\mathcal{P}) < 1$ follows from Lemma \ref{Characterisation of rho = 1}.
To see this, consider any $\ll,\s: \Z_p \to \R$ such that 
$\ll(U)=\s(V)$ with probability $1$ when $(U,V) \sim \cp$.
As $|Z_0|>1$ we can fix distinct elements $u,v$ of $Z_0$.
For each $x \in \Z_p$, there is a non-zero probability that $(U,V)=(u-x,x)$,
so we must have $\ll(u-x)=\s(x)$. Similarly, we must have $\ll(v-x)=\s(x)$.
Thus $\ll(u-x)=\ll(v-x)$, so $\ll$ is a constant function, and so is $\s$.
Thus $\rho < 1$ by Lemma \ref{Characterisation of rho = 1}.

We can therefore apply Theorem \ref{two-variable case of Theorem 7.1},
which provides the required parameters $r,\b,c$ 
for the assumptions on $E,F$ in Proposition \ref{Obtaining a non-empty intersection}.
Recalling that $S = \{ (x,y) \in \Z_p^n \times \Z_p^n: x+y \in Z_0^n \}$,
the conclusion \eqref{conclusion of the Hazla Holenstein Mossel result} of Theorem \ref{two-variable case of Theorem 7.1}
is equivalent to $|(E \times F) \cap S| \ge c|S|$.
 \end{proof}

\section{Extensions} \label{Section: Extending to several subsets of finite abelian groups}

In the present section we will generalise our result successively
from cyclic groups of prime order to finite abelian groups
and then from two summands to several summands.
The main ideas of the proofs will be the same,
although some non-trivial modifications are needed,
and the second generalisation will require more
of the framework developed in \cite{Hazla Holenstein Mossel}.

\subsection{Extending to finite abelian groups}

We begin with an extension of Theorem \ref{Main result} 
replacing $\Z_p$ for $p$ prime by any finite abelian group $G$.
As mentioned in the introduction, we will need to strengthen our assumption on $Z_0$
to not being contained in any strict coset in $G$.
The following construction shows that this assumption on $Z_0$
is necessary for us to obtain any non-trivial structure for $E,F$.

\begin{example}\label{counterexample for two sets}
Let $G$ be a finite abelian group and let $Z_0 \subset H + \{x\}$, 
where $H$ is a strict subgroup of $G$ and $x \in G$. 
Let $K$ be the quotient group $G/H$ and 
let $\pi:G \to K$ be the projection from $G$ to $K$. 
Then $\pi(Z_0) = \{ \kk \}$ for some $\kk \in K$.

Let $E_K$,$F_K$ be two subsets of $K^n$ such that for any $x_K \in E_K$ and $y_K \in F_K$ we have $x_K+y_K \neq \kk$. 
Consider any $E \subset (\pi^{\otimes n})^{-1}(E_K)$ and $F \subset (\pi^{\otimes n})^{-1}(F_K)$.
Then for any $x \in E$ and $y \in F$ we have $\pi^{\otimes n}(x)+\pi^{\otimes n}(y) \in E_K + F_K$, so $x+y \notin Z_0^n$.
However, the sets $E_K$,$F_K$ are fairly arbitrary: e.g.~if $\kk=0$ then
the only condition is that $E_K$ and $-F_K$ are disjoint.
\end{example}

Now we modify the proof of Theorem \ref{Main result} to prove Theorem \ref{Main result, groups case}.

\begin{proof} [Proof of Theorem \ref{Main result, groups case}]
The proof of Theorem \ref{Main result} also proves Theorem \ref{Main result, groups case},
assuming that we have the corresponding analogue of Proposition \ref{Obtaining a non-empty intersection},
so it suffices to show that the above proof of Proposition \ref{Obtaining a non-empty intersection}
also works replacing `$Z_0 \subset \Z_p$ with $|Z_0|>1$'
by `$Z_0 \subset G$ not contained in any strict coset in $G$'.
We will apply Theorem \ref{two-variable case of Theorem 7.1} similarly to before:
we take $\Omega=G$, $f=1_E$, $g=1_F$ and the distribution $\cp$ on $G^2$
where each pair $(x,y)$ with $x+y \in Z_0$ has probability $(|G||Z_0|)^{-1}$.
Similarly to before this satisfies conditions (i) and (ii) of Theorem \ref{two-variable case of Theorem 7.1},
so it remains to check condition (iii), that is $\rho(\cp)<1$.

We suppose for contradiction that $\rho(\cp)=1$. By Lemma \ref{Characterisation of rho = 1} 
we then have non-constant  $\ll,\s: G \to \R$ such that $\ll(U) = \s(V)$ with probability $1$ when $(U,V) \sim \cp$.
For every $x,y \in G$ with $x+y \in Z_0$ the probability of the event $\{U=x,V=y\}$ is positive, so $\ll(x) = \s(y)$. 
Thus for any $y_1, y_2 \in Z_0$ we have $\s(y_1-x) = \ll(x) = \s(y_2-x)$ for every $x \in G$.
Equivalently, $\s(x) = \s(x+y)$ for all $x \in G$ and all $y \in Z_0-Z_0$. 
Taking $x=0$ and iterating this identity, we see that $\s$ is constant on the subgroup $H$
generated by $Z_0-Z_0$. Furthermore, $Z_0$ is contained in a coset of $H$,
as for any $x \in Z_0$ we have $Z_0-\{x\} \subset Z_0 - Z_0 \subset H$.
By assumption on $Z_0$, we therefore have $H=G$, so $\s$ is constant.
This is a contradiction, so  $\rho(\cp)<1$.
 \end{proof}

As a sanity check, it may be helpful to observe where the above proof fails
(as it must) in the case where $Z_0$ is contained in a strict coset $H+\{x\}$ of $G$,
that is $\pi(Z_0)=\{\kk\}$ with notation as in Example \ref{counterexample for two sets}.
Fix any reals $(a_k: k \in K)$ and consider $\ll,\s: G \to \R$ 
defined by $\ll(x) = a_{\pi(x)}$, $\s(x) = a_{\kk-\pi(x)}$.
These functions are not constant in general,
but satisfy $\ll(x) = \s(y)$ whenever $x+y\in Z_0$.

\subsection{Extending to several summands}

In this subsection we obtain our most general result,
extending the previous result from two summands to any number of summands.
As for two summands, we need to assume that $Z_0 \subset G$ is not contained in any strict coset in $G$.
We start with a construction to show that this assumption is necessary
(in the case of two summands it reduces to a special case of Example \ref{counterexample for two sets}).

\begin{example} \label{general example}
Let $K=G/H$ with $H$ a strict subgroup of $G$ 
and $Z_0 \subset H + \{x\}$ with $\pi(Z_0)=\{\kk\}$ 
be as in Example \ref{counterexample for two sets}.
Define $S:K^n \to K$ by $S(y_1,\dots,y_n) = y_1 + \dots + y_n$.
Fix $a_1, \dots, a_d \in K$ such that $a_1 + \dots + a_d \neq n\kk$,
and define subsets $K_1, \dots, K_d$ of $K^n$ and $E_1,\dots,E_d$ of $G^n$
by $K_i =  S^{-1}(a_i)$ and $E_i = (\pi^{\otimes n})^{-1}(K_i)$ for each $i \in [d]$. 
Then clearly $(K_1 + \dots + K_d) \cap \{\kk\}^n\ = \emptyset$
and so $(E_1 + \dots + E_d) \cap Z_0^n$.
However, the subsets $E_i$ each have density $1/|K|$ but do not satisfy 
the conclusion of Theorem \ref{Main result, groups case, several sets} in general.
\end{example}

The remainder of this subsection will be devoted to the proof of Theorem \ref{Main result, groups case, several sets}. 
We start by stating the appropriate generalisation of Lemma \ref{Simultaneous regularity lemma}
(we omit the proof, as it is a straightforward adaptation of  Lemma \ref{Simultaneous regularity lemma},
considering the sum of energies of the $d$ sets $E_1, \dots, E_d$). 

\begin{lemma} \label{Simultaneous regularity lemma for d sets} 
Let $X$ be a finite set, let $d,r$ be positive integers, and let $\b,\a>0$. 
Then there exists $C = \mathrm{Psr}_d(X,r,\b,\a)$ such that 
for any subsets $E_1,\dots,E_d$ of $X^n$ there exists $I \subset [n]$ with $0<|I|<C$
which simultaneously satisfies 
\begin{equation} \P_{y \in X^I} ((E_j)_{I \to y} \text{ not } (r,\b) \text{-pseudorandom}) \le \a \quad \text{ for every } j \in [d].
\label{not pseudorandom for each of d sets} \end{equation}
\end{lemma}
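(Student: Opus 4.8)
The statement to prove is Lemma \ref{Simultaneous regularity lemma for d sets}, which is the $d$-set analogue of Lemma \ref{Simultaneous regularity lemma}. The plan is to run exactly the same inductive refinement process as in the proof of Lemma \ref{Simultaneous regularity lemma}, but now tracking the sum of the energies of all $d$ sets simultaneously. I would first set up the same inductive scheme: starting from (morally) $I_0 = \emptyset$, at the $s$th step, if \eqref{not pseudorandom for each of d sets} already holds for $I = I_s$ and every $j \in [d]$, we stop; otherwise there is at least one index $j \in [d]$ for which the probability that $(E_j)_{I_s \to y}$ fails to be $(r,\b)$-pseudorandom exceeds $\a$. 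Fix such a $j$, let $E_s^{\mathrm{psr},j} = \{y \in X^{I_s}: (E_j)_{I_s \to y} \text{ not } (r,\b)\text{-pseudorandom}\}$, and for each $y$ in this set choose some $I_{s+1,y} \subset I_s^c$ of size at most $r$ witnessing the failure of pseudorandomness, exactly as before. Then set $I_{s+1} = I_s \cup \bigcup_{y \in E_s^{\mathrm{psr},j}} I_{s+1,y}$.

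\textbf{Energy increment.} For the termination bound I would define, for each step $s$ and each $j \in [d]$, the energy $\ce_s(E_j) = \E_{y \in X^{I_s}} d((E_j)_{I_s \to y})^2$, and track the total energy $\sum_{j=1}^d \ce_s(E_j)$. Since the $I_s$ form an increasing chain, each $\ce_s(E_j)$ is nondecreasing by Cauchy--Schwarz, so the total is nondecreasing. When step $s$ acts on index $j$, the same variance computation as in Lemma \ref{Simultaneous regularity lemma} — interpreting the inner expectation as a variance and using Cauchy--Schwarz to pass from the variance over $X^{I_{s+1}\setminus I_s}$ to the variance over $X^{I_{s+1,y}}$ — gives $\ce_{s+1}(E_j) - \ce_s(E_j) \ge \a \cdot |X|^{-r} \b^2$ (the extra factor $\a$ coming from the fact that at least an $\a$-fraction of the $y$'s lie in $E_s^{\mathrm{psr},j}$), while all other energies do not decrease. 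Hence the total energy increases by at least $\a |X|^{-r}\b^2$ at each step. Since the total energy is always at most $d$, the number of steps is at most $d |X|^r \a^{-1}\b^{-2}$. At each step the set $I_{s+1}\setminus I_s$ has size at most $|X|^{|I_s|} r$, so iterating this bound over the bounded number of steps yields an absolute bound $C = \mathrm{Psr}_d(X,r,\b,\a)$ on $|I|$, depending only on $X$, $d$, $r$, $\b$, $\a$ and not on $n$.

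\textbf{Non-emptiness.} To guarantee $|I| > 0$, I would use the same device as in Lemma \ref{Simultaneous regularity lemma}: at the very first iteration, continue (rather than stop) regardless of whether \eqref{not pseudorandom for each of d sets} already holds, picking any index $j$ and any single coordinate to add; for that first step the total energy increment is only lower bounded by $0$ rather than $\a|X|^{-r}\b^2$, which does not affect the termination count. When the process stops, the resulting set $I = I_s$ satisfies \eqref{not pseudorandom for each of d sets} for all $j \in [d]$ by the stopping rule, and $0 < |I| < C$ by the above.

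\textbf{Main obstacle.} There is no real obstacle here — the argument is a routine adaptation of the proof of Lemma \ref{Simultaneous regularity lemma}, as the excerpt itself remarks. The only point requiring a moment's care is bookkeeping: at a step that acts on index $j$, one must check that the energies $\ce_s(E_{j'})$ for $j' \neq j$ genuinely do not decrease (again Cauchy--Schwarz, since $I_{s+1} \supseteq I_s$), so that the single-index energy gain translates into a gain in the total; and one must keep the final size bound on $|I|$ honest by noting that the per-step growth $|X|^{|I_s|} r$ is itself bounded once the number of steps is bounded.
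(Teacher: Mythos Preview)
Your proposal is correct and follows exactly the approach the paper indicates: the paper explicitly omits the proof, remarking that it is a straightforward adaptation of Lemma \ref{Simultaneous regularity lemma} obtained by ``considering the sum of energies of the $d$ sets $E_1,\dots,E_d$'', which is precisely what you do. Your bookkeeping (monotonicity of all energies via Cauchy--Schwarz, per-step gain $\a|X|^{-r}\b^2$ for the active index, total at most $d$, forced non-emptiness at the first step) matches the $d=2$ argument in every respect.
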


Next we formulate the appropriate generalisation of Proposition \ref{Obtaining a non-empty intersection}.

\begin{proposition} \label{Obtaining a non-empty intersection for d sets} 
Let $d \ge 2$ be an integer, let $G$ be a finite abelian group,
let $Z_0$ be a subset of $G$ that is not contained in any strict coset in $G$, and let $\e>0$. 
Let 
\[ S = \{ (x^1,\dots,x^d) \in (G^n)^d: x^1+\dots+x^d \in Z_0^n \}. \]
Then there exist $\b>0$, $c>0$, and a positive integer $r$ such that 
whenever $E_1,\dots,E_d$ are $(r,\b)$-pseudorandom subsets of $G^n$ with density at least $\e$
we have  $|S \cap \prod_{i=1}^d E_i| \ge c|S|$.
In particular, there exist $x^1 \in E_1, \dots, x^d \in E_d$ satisfying $x^1 + \dots + x^d \in Z_0^n$.
\end{proposition}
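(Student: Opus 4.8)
The plan is to follow the two-summand argument, substituting for Theorem \ref{two-variable case of Theorem 7.1} its general $d$-variable form from \cite{Hazla Holenstein Mossel} (which will have to be stated in this section). First I would introduce the probability distribution $\cp$ on $G^d$ that is uniform on the set $\{(x^1,\dots,x^d)\in G^d : x^1+\dots+x^d \in Z_0\}$, so that each such tuple has probability $(|G|^{d-1}|Z_0|)^{-1}$. A short count shows that for every $j$ the marginal of the $j$th coordinate under $\cp$ is uniform on $G$: having fixed that coordinate, the valid completions number $|G|^{d-2}|Z_0|$ regardless of its value (choose $d-2$ of the remaining coordinates freely, then the last is confined to a translate of $Z_0$). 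Taking $f_j = 1_{E_j}$, the density hypothesis then gives $\E f_j(X^j) = d(E_j) \ge \e$ for each $j$, the $(r,\b)$-pseudorandomness of $E_j$ provides the low-complexity hypothesis, and since $\cp^{\otimes n}$ is exactly the uniform distribution on $S$, the conclusion $\E\big[\prod_{j=1}^d f_j(X^j)\big] \ge c$ is precisely the bound $|S \cap \prod_{j=1}^d E_j| \ge c|S|$.

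The substantive step is to verify the non-degeneracy hypothesis of the $d$-variable theorem, namely the multi-party analogue of the condition $\rho(\cp)<1$ used in Theorem \ref{two-variable case of Theorem 7.1}; concretely this asserts that for every $j$ the two-party maximal correlation $\rho(X^j ; (X^i)_{i \ne j})$ is strictly below $1$. I would check this by running the argument from the proof of Theorem \ref{Main result, groups case}. Suppose it fails for some $j$, say $j=d$; then by (the two-variable) Lemma \ref{Characterisation of rho = 1}, applied with one part equal to $G$ and the other to $G^{d-1}$, there are non-constant $\ll : G \to \R$ and $\s : G^{d-1} \to \R$ with $\ll(X^d) = \s(X^1,\dots,X^{d-1})$ almost surely under $\cp$. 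Fixing $(a_1,\dots,a_{d-1}) \in G^{d-1}$, the coordinate $X^d$ is then uniform on $Z_0 - (a_1+\dots+a_{d-1})$, so $\ll$ must be constant on $Z_0 - s$ for every $s \in G$; exactly as in the two-summand case this forces $\ll$ to be invariant under translation by $Z_0 - Z_0$, hence (since $Z_0$ lies in no strict coset, i.e.\ $\langle Z_0 - Z_0\rangle = G$) constant, a contradiction. By symmetry $\rho(X^j ; (X^i)_{i\ne j}) < 1$ for all $j$. If the theorem as stated instead asks for $\rho(X^S ; X^{S^c}) < 1$ for all nonempty proper $S \subset [d]$, the same computation applies with a tuple in place of the single coordinate.

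Granting the hypotheses, the $d$-variable form of \cite[Theorem 7.1]{Hazla Holenstein Mossel} supplies the parameters $r,\b,c$ and the inequality $\E\big[\prod_{j} f_j(X^j)\big] \ge c$, which translates to $|S \cap \prod_j E_j| \ge c|S| > 0$ and in particular yields the desired tuple. I expect the main obstacle to be expository rather than mathematical: stating the general Hazła--Holenstein--Mossel theorem with a non-degeneracy hypothesis that is both genuinely sufficient for the conclusion and verifiable for $\cp$ as above. Once that is pinned down, the additive verification is a routine extension of the two-summand argument, and the reduction to pseudorandom summands (via Lemma \ref{Simultaneous regularity lemma for d sets}, passing to the slices $(E_j)_{I \to x'_j}$ and defining $E_j'$ from the dense pseudorandom slices) is formally identical to the proof of Theorem \ref{Main result}.
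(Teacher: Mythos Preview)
Your proposal is correct and follows essentially the same approach as the paper: define the uniform distribution $\mathcal{P}$ on $d$-tuples with sum in $Z_0$, observe that its marginals are uniform, verify $\rho(\mathcal{P})<1$ via the fact that $\langle Z_0 - Z_0\rangle = G$, and apply the $d$-variable Ha\k{z}\l{}a--Holenstein--Mossel theorem (stated in the paper as Theorem~\ref{d-variable case of Theorem 7.1}, with exactly the singleton-versus-rest correlation hypothesis you anticipated). The only cosmetic difference is in the $\rho<1$ check: the paper conditions on $d-2$ coordinates and invokes the already-established $d=2$ case for the resulting two-variable distribution, whereas you condition on all $d-1$ coordinates at once and argue directly that $\lambda$ is constant on every translate of $Z_0$; the underlying computation is the same.
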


Before proving Proposition \ref{Obtaining a non-empty intersection for d sets}, 
we present the deduction of Theorem \ref{Main result, groups case, several sets},
which is very similar to the case of two summands,
but we nonetheless write it in full for clarity.

\begin{proof}[Proof of Theorem \ref{Main result, groups case, several sets}]
Let $\e>0$, and let $E_1,\dots,E_d$ be subsets of $G^n$ such that $E_1+\dots+E_d$ avoids $Z_0^n$. 
As for two summands, the case where some $E_i$ has density at most $\e$ is trivial,
so we may assume $d(E_i) \ge \e$ for all $i \in [d]$.

We take $r$, $\b$ to be the parameters given by Proposition \ref{Obtaining a non-empty intersection for d sets} applied with $d,G,Z_0,\e/2$. 
Applying Lemma \ref{Simultaneous regularity lemma for d sets} with $X=G$, these $r,\b$ and $\a = \e/2$, 
we obtain a non-empty subset $I$ of $[n]$ with size at most $\mathrm{Psr}_d(G,r, \b, \e/2)$ satisfying  \eqref{not pseudorandom for each of d sets}.
For each $j \in [d]$ we consider
\begin{align*} 
E^{\mathrm{psr}}_j  & = \{x' \in G^I: (E_j)_{I \to x'} \text{ is } (r,\b)\text{-pseudorandom}\}, \text{ and } \\
E'_j & = \{x' \in E^{\mathrm{psr}}_j: d((E_j)_{I \to x'}) > \e/2\}. 
\end{align*}

We note that any $(x',x'')$ in $E_j \setminus (E'_j \times G^{I^c})$
satisfies $x' \in (E^{\mathrm{psr}}_j)^c$ or $d((E_j)_{I \to x'}) \le \e/2$,
so the density of such $(x',x'')$ in $G^n$ is at most $\a+\e/2=\e$.

It remains to show that  $(E'_1+\dots+E'_d) \cap Z_0^I = \emptyset$.
To see this, suppose on the contrary that we have
$((x^1)',\dots,(x^d)') \in E'_1 \times \dots \times E'_d$ with  $(x^1)'+\dots+(x^d)' \in Z_0^I$.
By definition of $E'_1,\dots,E'_d$ we can apply  
Proposition \ref{Obtaining a non-empty intersection for d sets} 
to $(E_1)_{I \to (x^1)'}, \dots, (E_d)_{I \to (x^d)'}$, thus obtaining
$((x^1)'',\dots,(x^d)'') \in (E_1)_{I \to (x^1)'} \times \dots \times (E_d)_{I \to (x^d)'}$ 
with $(x^1)''+\dots+(x^d)'' \in Z_0^{I^c}$.
However, we then have $x^j=((x^j)',(x^j)'') \in E_j$ for each $j \in [d]$
with $x^1+\dots+x^d \in Z_0^n$.
This contradicts our assumption on $(E_1,\dots,E_d)$,
so  $(E'_1+\dots+E'_d) \cap Z_0^I = \emptyset$, as required.
 \end{proof}

To complete the proof of Theorem  \ref{Main result, groups case, several sets},
it remains to establish Proposition \ref{Obtaining a non-empty intersection for d sets},
via the result of \cite{Hazla Holenstein Mossel}, which we will now state in more generality.
Let $(U_1,\dots,U_d)$ be random variables following some probability distribution $\mathcal{P}$ on $\Omega^d$,
where $\Omega$ is some finite set. Let
\[\rho(\mathcal{P})= \max_{j \in [d]} \sup\{\Cov(\ll(U_j),\s(\overline{U_j})) : \Var \ll(U_j) = \Var \s(\overline{U_j}) = 1 \},\] 
where we write $\overline{U_j}$ for $(U_1, \dots, U_{j-1}, U_{j+1}, \dots, U_d)$
and consider functions $\ll: \Omega \to \R$ and $\s: \Omega^{[d] \setminus\{j\}} \to \R$.
We now state the required result from \cite{Hazla Holenstein Mossel} 
and deduce Proposition \ref{Obtaining a non-empty intersection for d sets}.

\begin{theorem} \label{d-variable case of Theorem 7.1} [\cite{Hazla Holenstein Mossel}, Theorem 7.1, special case] 
For every integer $d \ge 2$, $\mu > 0$ and $\rho<1$ there exists a positive integer $r$ and some $\beta, c > 0$ such that the following holds. 

Let $(X^1,\dots,X^d)$ be random variables taking values in $(\Omega^n)^d$ for some finite set $\Omega$, 
such that $(X^1_i,\dots,X^d_i)$ with $i \in [n]$ follow the same distribution $\cp$ on $\Omega^d$ independently.

Suppose $\rho(\cp) \le \rho$ and $f_1,\dots,f_d: \Omega^n \to \{0,1\}$ are such that for every $j \in [d]$
the set $\{f_j=1\} \subset \Omega^n$ is $(r,\b)$-pseudorandom and $\E f_j(X^j) \ge \mu$.
Then $ \E f_1(X^1) \dots f_d(X^d) \ge c$. 
\end{theorem}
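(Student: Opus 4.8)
The plan is to obtain this statement as the announced special case of \cite[Theorem 7.1]{Hazla Holenstein Mossel}, so the main work is to match up the hypotheses and the conclusion, and (for the reader's orientation) to recall the internal structure of the argument in \cite{Hazla Holenstein Mossel}. Note first that the quantity $\rho(\cp)$ defined just above the theorem is exactly the ``one block versus the rest'' maximal correlation used in \cite{Hazla Holenstein Mossel}: for each $j$ it measures how well a function of $U_j$ alone can be predicted by a function of $\overline{U_j}$, and the assumption $\rho(\cp)<1$ is precisely the non-degeneracy hypothesis under which their theorem applies. I would therefore specialise their general result by taking the same finite alphabet $\Omega$ on each of the $n$ coordinates, the product measure $\cp^{\otimes n}$ on $(\Omega^n)^d$, all functions $f_j$ Boolean with the common mean threshold $\mu$, and the common correlation bound $\rho$; their conclusion is $\E f_1(X^1)\cdots f_d(X^d)\ge c$ with $c=c(d,\mu,\rho)>0$, and the quantifier order ``for every $d,\mu,\rho$ there exist $r,\b,c$'' is inherited directly. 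The one point needing care is that our combinatorial pseudorandomness hypothesis, that each set $\{f_j=1\}$ is $(r,\b)$-pseudorandom, matches the uniformity/resilience hypothesis demanded in \cite{Hazla Holenstein Mossel} (up to the trivial matter that pseudorandomness should be read relative to the coordinate marginal of $\cp$, which in all our applications is uniform).

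For that verification I would pass to Fourier-analytic language: for every $T\subseteq[n]$ with $1\le|T|\le r$ the hypothesis gives $\Var\bigl(\E[f_j\mid X_T]\bigr)\le\b^2$, since every conditional density of $\{f_j=1\}$ on $T$ lies within $\b$ of its global density, and expanding in the appropriate orthonormal basis this controls the low-degree part of $f_j$ that drives the invariance principle. The internal logic of \cite{Hazla Holenstein Mossel} then runs: (a) the uniformity hypothesis, after the standard regularisation/truncation step, replaces each $f_j$ by a bounded multilinear object with small low-degree influences; (b) the invariance principle for correlated product spaces replaces $(f_1(X^1),\dots,f_d(X^d))$ by $(\tilde f_1(N^1),\dots,\tilde f_d(N^d))$, where $(N^1,\dots,N^d)$ are jointly Gaussian with the same ``block versus rest'' correlations, all $\le\rho$, and each $\tilde f_j$ is $[0,1]$-valued with mean at least $\mu-o(1)$; (c) a purely Gaussian computation gives $\E\tilde f_1(N^1)\cdots\tilde f_d(N^d)\ge c(d,\mu,\rho)>0$, essentially because $\rho<1$ forces the joint Gaussian density to be comparable to the product of its marginals on a region of nearly full measure, so the product expectation is bounded below by a positive multiple of $\prod_j\E\tilde f_j$. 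It is exactly at step (c) that $\rho<1$ is indispensable: when $\rho=1$ the Gaussian law degenerates and one may take the $\tilde f_j$ to be indicators of half-spaces with empty common intersection, mirroring the failure exhibited by Example \ref{general example} and the characterisation in Lemma \ref{Characterisation of rho = 1}.

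I expect the main obstacle to be step (b), the quantitative invariance principle in the correlated-spaces setting: one needs the error term to be governed by a quantity that tends to $0$ as $r\to\infty$ and $\b\to0$ \emph{uniformly in $n$}, which is the technical heart of \cite{Hazla Holenstein Mossel} and the reason we cite it rather than reprove it. The only genuinely new bookkeeping on our side is the dictionary between the combinatorial pseudorandomness used here and their analytic resilience condition, together with the observation that the ``one coordinate versus the rest'' correlation $\rho(\cp)$ displayed above is the correct correlation parameter to plug in; once these are in place, the theorem follows.
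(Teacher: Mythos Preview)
Your proposal is correct and takes essentially the same approach as the paper: the theorem is simply quoted as a special case of \cite[Theorem 7.1]{Hazla Holenstein Mossel} with no further proof given, and your discussion of how the hypotheses line up (in particular that $(r,\b)$-pseudorandomness supplies the required low-degree uniformity and that $\rho(\cp)$ is the relevant correlation parameter) is accurate elaboration beyond what the paper itself provides.
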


\begin{proof}[Proof of Proposition \ref{Obtaining a non-empty intersection for d sets}]
Let $E_1, \dots, E_d$ be as in Proposition \ref{Obtaining a non-empty intersection for d sets}. 
In Theorem \ref{d-variable case of Theorem 7.1} we take $\Omega$ to be $G$
and $\mathcal{P}$ to be the probability distribution on $G^d$ 
where each $(x^1, \dots, x^d)$ with $x^1+\dots+x^d \in Z_0$ has probability $(|G|^{d-1}|Z_0|)^{-1}$ 
and all other $d$-tuples $(x^1, \dots, x^d)$ have probability $0$.
Each marginal of $\cp$ is uniform, so taking $f_j = 1_{E_j}$ 
we have $\E f_j(X^j) = d(E_j) \ge \mu := \e$ for every $j \in [d]$.

As in the proof of Proposition \ref{Obtaining a non-empty intersection},
the proposition will follow from Theorem \ref{d-variable case of Theorem 7.1}  
once we verify the remaining condition $\rho(\cp)<1$.
We can use the characterisation from Lemma \ref{Characterisation of rho = 1} 
even for $\ll: G \to \R$ and $\s: G^{[d] \setminus \{i\}} \to \R$ with $i \in [d]$, 
since its proof did not rely on the domains on which $\ll,\s$ are defined. 
The roles of the $d$ coordinates are interchangeable, 
so it suffices to show that if $\ll: G \to \R$ and $\s: G^{d-1} \to \R$ satisfy 
$\ll(U_1) = \s(U_2,\dots,U_d)$ with probability $1$ when $(U_1,\dots,U_d) \sim \mathcal{P}$ then $\ll, \s$ are constant. 

Fix any $(x^3,\dots,x^d) \in G^{d-2}$ and consider the event $A = \{ U_3 = x^3, \dots, U_d = x^d\}$.
(We can assume $d>2$, or regard $A$ as the trivial event that always holds if $d=2$.)
Then $\cp(A)>0$, so we can consider the conditional distribution 
$\cp_{(x^3,\dots,x^d)}$ of $(U^1,U^2)$ under $\mathcal{P}$ conditioned on $A$,
which satisfies $\ll(U_1) = \s(U_2,x^3,\dots,x^d)$ with probability $1$.

We note that $\mathcal{P}_{(x^3,\dots,x^d)}$ is the distribution on $G^2$
that is uniformly distributed on pairs $(x^1,x^2)$ with $x^1 + x^2$ 
in the translate $Z_0 - \{x^3 + \dots + x^d\}$ of $Z_0$.
Applying the $d=2$ case (which we proved in  Proposition \ref{Obtaining a non-empty intersection})
to $\mathcal{P}_{(x^3,\dots,x^d)}$, which is valid as  $Z_0 - \{x^3 + \dots + x^d\}$ is not contained
in any strict coset in $G$, we have $\rho(\mathcal{P}_{(x^3,\dots,x^d)})<1$.

We then define $\s_{(x^3,\dots,x^d)}: G \to \R$ by $\s_{(x^3,\dots,x^d)}(x^2) = \s(x^2,x^3,\dots,x^d)$
and apply Lemma \ref{Characterisation of rho = 1} to $\ll$ and $\s_{(x^3,\dots,x^d)}$.
As $\ll(U_1) = \s_{(x^3,\dots,x^d)}(U_2)$ with probability $1$ for $(U_1,U_2) \sim\mathcal{P}_{(x^3,\dots,x^d)}$,
we deduce that  $\ll$ and $\s_{(x^3,\dots,x^d)}$ are constant, always taking some fixed value $c$.
 
It remains to show that $\s$ is constant. 
Consider any $(x^2,x^3,\dots,x^d) \in G^{d-1}$ and $x^1 \in Z_0 - \{x_2+x_3+\dots+x_d\}$.
Then $(x^1,x^2)$ is in the support of $(U_1,U_2) \sim\mathcal{P}_{(x^3,\dots,x^d)}$,
so $c = \ll(x^1) = \s_{(x^3,\dots,x^d)}(x^2) = \s(x^2,x^3,\dots,x^d)$.
\end{proof}

\section{Further discussion}\label{Section: Further discussion}

Our first open problem is to improve the bound in our main theorems, starting with the simplest setting.

\begin{question}

Can we for each prime $p$ require $|I| \le O(\log(\e^{-1}))$ in Theorem \ref{Main result} ?

\end{question}

Such a bound would be optimal, as is shown by the example of 
$Z_0 = \{0,1\}$, $\e = p^{-(k+1)}$, $E = (\Z_p^k \setminus \{0,1\}^k) \times \Z_p^{n-k}$, 
and $F = \{0\}^k \times \Z_p^{n-k}$ for some integer $k$.
Indeed, if $E',F'$ satisfy the conclusion of Theorem \ref{Main result} for some non-empty $I \subset [n]$
then $F' \ne \emptyset$ and $E' \ne \Z_p^I$, so $E_0 := E \setminus (E' \times \Z_p^{I^c})$
satisfies $\e \ge d(E_0) \ge  p^{-|I|} - (p/2)^{-k}$, giving $|I| = \Omega(k)$.

Our second problem is to consider other directions of generalising Theorem \ref{Main result},
besides the extensions to finite abelian groups and more summands considered in this paper.
Our general setting (with two inputs for simplicity)
is an arbitrary function  $f: X \times Y \to Z$ and its tensor product $f^{\otimes n}: X^n \times Y^n \to Z^n$, where $X,Y,Z$ are finite sets.
We would like to characterise pairs $(f,Z_0)$ with $Z_0 \subset Z$ 
satisfying the following statement analogous to that in Theorem \ref{Main result}:
for every $\e>0$ there is some $C=C(f,\e)$ 
such that for any $E \subset X^n$ and $F \subset Y^n$ with $f^{\otimes n}(E,F) \cap Z_0^n  = \emptyset$
there exist $I \subset [n]$ with $0<|I|<C$ and $E' \subset X^I$, $F' \subset Y^I$ satisfying 
\[ |E \setminus (E' \times X^{I^c})| \le \e |X^n|,
\quad |F \setminus (F' \times Y^{I^c})| \le \e |Y^n|,
\quad f^{\otimes I} (E',F') \cap Z_0^I = \emptyset. \]

The following result gives a general condition on $(f,Z_0)$ 
that guarantees the existence of the desired sets $I,E',F'$, 
for the trivial reason that there are almost complete sets $E_n$ and $F_n$
such that $f^{\otimes n}(E_n,F_n) \cap Z_0^n  = \emptyset$.
(To be formal, consider any $\e>0$, let $n'$ be such that 
we can find $E_{n'}$, $F_{n'}$ as below with densities at least $1-\eps$,
and take $I = [n], E'=E,F'=F$ if $n<n'$, or $I = [n'], E' = E_{n'}, F' = F_{n'}$ if $n \ge n'$.)

\begin{proposition} \label{construction with empty intersection but very dense sets}
Let $X,Y,Z$ be finite sets, let $f: X \times Y \to Z$ and let $Z_0 \subset Z$.
Suppose that there exist $A \subset X$ and $B \subset Y$ such that 
$f(A \times B) \cap Z_0 = \emptyset$  and $|A|/|X| + |B|/|Y| > 1$.

Then there exist subsets $E_n \subset X^n$ and $F_n \subset Y^n$ for every positive integer $n$ 
such that $f^n(E_n \times F_n) \cap Z_0^n = \emptyset$ and $\min(d(E_n),d(F_n)) \to 1$ as $n \to \infty$.
\end{proposition}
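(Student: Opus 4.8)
The plan is to build $E_n$ and $F_n$ out of the ``typical'' points of $X^n$ and $Y^n$, using the fact that a uniformly random point of $X^n$ has almost all of its coordinates in $A$, and likewise for $Y^n$ and $B$. Set $\alpha = |A|/|X|$ and $\beta = |B|/|Y|$; the hypothesis is that $\alpha + \beta > 1$, equivalently $(1-\alpha) + (1-\beta) < 1$, so I can fix a real $\delta > 0$ with $(1-\alpha) + (1-\beta) + 2\delta < 1$. I would then take
\[ E_n = \{ x \in X^n : |\{i \in [n] : x_i \notin A\}| \le (1-\alpha+\delta)n \}, \qquad F_n = \{ y \in Y^n : |\{i \in [n] : y_i \notin B\}| \le (1-\beta+\delta)n \}, \]
that is, the points with not too many coordinates lying outside $A$ (respectively outside $B$).

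The first step is to verify that $f^n(E_n \times F_n) \cap Z_0^n = \emptyset$. Given $x \in E_n$ and $y \in F_n$, the two ``bad coordinate'' sets $\{i : x_i \notin A\}$ and $\{i : y_i \notin B\}$ have total cardinality at most $(1-\alpha+\delta)n + (1-\beta+\delta)n < n$ by the choice of $\delta$, so they do not cover $[n]$; hence there is a coordinate $i$ with $x_i \in A$ and $y_i \in B$, and then $f(x_i,y_i) \in f(A \times B) \subseteq Z \setminus Z_0$, so the $i$th coordinate of $f^n(x,y)$ lies outside $Z_0$ and thus $f^n(x,y) \notin Z_0^n$. (For small $n$ one should use the integer parts $\lfloor (1-\alpha+\delta)n \rfloor$ and $\lfloor (1-\beta+\delta)n \rfloor$ as thresholds, but since their sum is still less than $n$ the pigeonhole argument is unaffected.)

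The second step is to check $\min(d(E_n), d(F_n)) \to 1$. For a uniformly random $x \in X^n$, the count $|\{i : x_i \notin A\}|$ is a sum of $n$ independent $\mathrm{Bernoulli}(1-\alpha)$ random variables, hence concentrated around its mean $(1-\alpha)n$; by Chebyshev's inequality (or a Chernoff bound) the probability that it exceeds $(1-\alpha+\delta)n$ is $O_{\delta}(1/n)$, which tends to $0$. So $d(E_n) \to 1$, and symmetrically $d(F_n) \to 1$, which gives the claim.

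I do not anticipate a genuine obstacle here: the entire argument hinges on the single inequality $(1-\alpha) + (1-\beta) < 1$, and this is exactly the strict hypothesis $|A|/|X| + |B|/|Y| > 1$, which is precisely what provides the slack needed to pick $\delta$ so that the two bad-coordinate sets cannot jointly cover $[n]$. If anything, the only mild care required is in the concentration estimate and in handling non-integer thresholds, both of which are entirely routine.
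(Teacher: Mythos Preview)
Your argument is correct. The pigeonhole step is sound: the two ``bad coordinate'' sets have total size strictly less than $n$ by the choice of $\delta$, so some coordinate has $(x_i,y_i)\in A\times B$, forcing $f^{\otimes n}(x,y)\notin Z_0^n$. The density claim is immediate from concentration of the binomial, and the edge cases you flag (integer thresholds, small $n$) cause no trouble.

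The paper takes a different route. Rather than Hamming-ball-type sets, it uses a \emph{tribes} construction: partition the first $rs$ coordinates into $s$ blocks of length $r$, let $E_n$ be the set of $x$ for which \emph{every} block contains at least one coordinate in $A$, and let $F_n$ be the set of $y$ for which \emph{some} block has all coordinates in $B$. Given $(x,y)\in E_n\times F_n$, the block witnessing $y\in F_n$ must contain a coordinate witnessing $x\in E_n$, and that coordinate lies in $A\times B$. The densities are then controlled by an explicit calculation in $r$ and $s$, exploiting $1-|A|/|X|<|B|/|Y|$ to make $(1-\alpha)^r/\beta^r$ small. Your approach is more elementary and transparent---it isolates exactly the inequality $(1-\alpha)+(1-\beta)<1$ and feeds it straight into the law of large numbers---whereas the paper's construction is more hands-on and yields completely explicit dependence of the density defect on the block parameters, at the cost of a slightly more delicate two-parameter optimisation.
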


The condition in Proposition \ref{construction with empty intersection but very dense sets}
is satisfied by some natural functions, such as the minimum function $[2k] \times [2k] \to [2k]$
for some $k \ge 2$ with $Z_0 = [2k] \setminus [k+1]$. 
On the other hand, it contrasts heavily with the case that $f:G \times G \to G$
is addition on a finite abelian group $G$, as for any $A,B \subset G$ with $A+B \ne G$
we have $A$ disjoint from some translate of $-B$, so $d(A)+d(B) \le 1$,
and similarly $d(E)+d(F) \le 1$ for any $E,F \subset G^n$ with $E+F \ne G^n$.

\begin{proof} 
Let $r,s,n$ be positive integers such that $rs \le n$. We consider the `tribe-like' sets
\begin{align*} E_n & = \{x_1 \in A \vee \dots \vee x_r \in A\} \wedge \dots \wedge \{x_{r(s-1)+1} \in A \vee \dots \vee x_{rs} \in A\}, \\ 
F_n & = \{y_1 \in B \wedge \dots \wedge y_r \in B\} \vee \dots \vee \{y_{r(s-1)+1} \in B \wedge \dots \wedge y_{rs} \in B\}. 
\end{align*} 
We claim that $f^{\otimes n}(E_n \times F_n) \cap Z_0^n = \emptyset$. 
To see this, consider any $(x,y) \in E_n \times F_n$.
By definition of $F_n$, there is some $s' \in [s]$ 
such that $y_i \in B$ for all $i \in [r(s'-1)+1,rs']$.
By definition of $E_n$, there is some $i \in [r(s'-1)+1,rs']$
such that $x_i \in A$, so $(x_i, y_i) \in A \times B$.
As $f(A \times B) \cap Z_0 = \emptyset$,
we deduce $f^{\otimes n}(x,y) \notin Z_0^n$, so the claim holds.

It remains to estimate the densities of $E_n$ and $F_n$. We have
\begin{align*} 
1-d(E_n) & =  1- (1-(1-|A|/|X|)^r)^s \le  s(1-|A|/|X|)^r, \\
1-d(F_n) & = (1-(|B|/|Y|)^r)^s \le \exp(-s(|B|/|Y|)^r).
\end{align*} 
As $1-|A|/|X| < |B|/|Y|$, for any $\e>0$ we can choose $r$ 
such that $(1-|A|/|X|)^r (|Y|/|B|)^r < \e$.
Then choosing $s = \lceil (|Y|/|B|)^r \log(\e^{-1}) \rceil$
gives $1-d(F_n) \le \e$ and $1-d(E_n) \le 2\e \log(\e^{-1})$,
which can be both arbitrarily small, as required.
\end{proof}

For further insight on the problem for general functions $f$,
it seems natural to generalise from addition in abelian groups,
to multiplication in general groups, and then further to latin squares,
namely functions $f: X \times X \to X$ (for some finite set $X$) such that for every $x,y$ in $X$
there is a unique $z$ such that $f(x,z)=y$ 
and  a unique $z'$ such that $f(z',x)=y$.

\begin{question}
Does some analogue of  Theorem \ref{Main result} hold for latin squares?
\end{question}

\end{document}